\newcommand{\junk}[1]{}
\newcommand{\formal}[1]{\ensuremath{\textsf{#1}}}
\newcommand{\dword}[1]{\textit{#1}}
\newcommand{\clnb}[1]{\ensuremath{\formal{N}[#1]}}
\newcommand{\graph}[1]{\ensuremath{\mathit{#1}}}
\newcommand{\capt}{\ensuremath{\mathrm{capt}}}
\newcommand{\rfunct}[1]{\ensuremath{\mathrm{cr}(#1)}}
\newcommand{\vect}[1]{\ensuremath{\mathbf{#1}}}
\newcommand{\uprank}[2]{\ensuremath{#1^{[#2 ]}}}
\newcommand{\X}{\ensuremath{U}}
\newcommand{\upcorner}{\emph{Upward Cornering}}
\newcommand{\tp}[1]{\ensuremath{#1}-top}
\newcommand{\pathcon}{\emph{Path Contraction}}
\newcommand{\power}[1]{\ensuremath{\mathcal{P}(#1)}}
\newtheorem{defn}{Definition}[section]
\newtheorem{theorem}[defn]{Theorem}
\newtheorem{proposition}[defn]{Proposition}
\newtheorem{cor}[defn]{Corollary}
\newtheorem{question}[defn]{Question}
\newtheorem{lemma}[defn]{Lemma}
\newtheorem{rem}[defn]{Remark}
\newtheorem*{def_nonum}{Definition}
\newtheorem*{lem_nonum}{Lemma}
\title{Capture-time Extremal Cop-Win Graphs}
\author{David Offner\\
\small Department of Mathematics and Computer Science\\[-0.8ex]
\small Westminster College\\[-0.8ex] 
\small New Wilmington, PA, U.S.A.\\
\small\tt offnerde@westminster.edu\\
\and
Kerry Ojakian\\
\small Department of Mathematics and Computer Science\\[-0.8ex]
\small Bronx Community College (CUNY)\\[-0.8ex]
\small Bronx, NY, U.S.A\\
\small\tt kerry.ojakian@bcc.cuny.edu
}
\begin{document}
\maketitle

\begin{abstract}  

We investigate extremal graphs related to the game of Cops and Robbers.  We focus on graphs where a single cop can catch the robber; such graphs are called cop-win. The capture time of a cop-win graph is the minimum number of moves the cop needs to capture the robber.  We consider graphs that are extremal with respect to capture time, i.e. their capture time is as large as possible given their order.  We give a new characterization of the set of extremal graphs.
For our alternative approach we assign a rank to each vertex of a graph, and then study which configurations of ranks are possible.  We partially determine which configurations are possible, enough to prove some further extremal results.  We leave a full classification as an open question.

\end{abstract}

\section{Introduction}

The game of Cops and Robbers is a perfect-information two-player pursuit-evasion game played on a  graph.  To begin the game, the cop and robber each choose a vertex to occupy, with the cop choosing first.  Play then alternates between the cop and the robber, with the cop moving first. On a turn a player may move to an adjacent vertex or stay still.  If the cop and robber ever occupy the same vertex, the robber is caught and the cop wins.  If the cop can force a win on a graph, we say the graph is \dword{cop-win}. The game was introduced by Nowakowski and Winkler \cite{NW83}, and Quilliot \cite{Qui78}. A nice introduction to the game and its many variants is found in 
the book by Bonato and Nowakowski \cite{BN11}. 

One of the fundamental results about the game is a characterization of the cop-win graphs as those graphs which have a \dword{cop-win ordering} \cite{NW83}, \cite{Qui78}.   
Independently, Clarke, Finbow, and MacGillivray \cite{CFM2014} and the authors of this paper~\cite{OO16} developed an alternative characterization that we call \emph{corner ranking}. A thorough discussion of the similarities and differences of our approach is given in \cite{OO16}. As with cop-win orderings, corner ranking characterizes which graphs are cop-win.  Corner ranking can also be used to determine the capture time of a cop-win graph $G$, as well as describe optimal strategies (in terms of capture time) for the cop and robber, where the \dword{capture time} of a cop-win graph $G$, denoted $\capt(G)$, is the fewest number of moves the cop needs to guarantee a win, not counting their initial placement. For example, on the path with $5$ vertices, the capture time is 
$2$.  In Section~\ref{cornran}, we summarize some work
from \cite{OO16} on corner ranking.

Bonato et. al.
 \cite{BGHK2009} defined the following capture time function on the natural numbers.
\begin{defn} \label{def_capt}
Suppose $n > 0$ is a natural number. Let 
$\capt(n) $ denote the capture time of a cop-win graph on $n$ vertices with maximum capture time.
\end{defn}

\noindent
For example, $\capt(4) = 2$ since a path on four vertices has capture time 2, and no graph with 4 vertices has a capture time greater than 2. 
Define a cop-win graph $G$ with $n$ vertices to be \dword{CT-maximal} if $\capt(G) = \capt(n)$.
Building on \cite{BGHK2009}, Gavenciak~\cite{Gav2010} proved that for $n\ge 7$,  $\capt(n) = n - 4$, and gave a characterization of the CT-maximal graphs. More recently, Kinnersley~\cite{Kin17} has studied upper bounds on capture time of graphs where more than one cop is required to catch the robber.
Gavenciak's proof relies on a detailed analysis of the conceivable cop and robber strategies.  We give an alternative proof (in Theorem~\ref{gave}), which instead proceeds by analyzing the structure of graphs using a tool we call the \emph{rank cardinality list}. One advantage of our approach is
that it makes case analysis easier.
In fact, Gavenciak uses a computer at one step in the proof (Lemma 10 in \cite{Gav2010}) to analyze graphs of order less than or equal 
to 8.  We can carry out the analysis without a computer, using the theory we develop about rank cardinality lists.

Our approach to the proofs is to associate cop-win graphs with finite lists. The corner ranking procedure assigns each vertex in a cop-win graph an integer, so in Section~\ref{RC} we define the \emph{rank cardinality list} of a cop-win graph as the list whose $i$th entry is the number of vertices of corner rank $i$.  Since the length of the list is the corner rank of the graph, which determines capture time, 
we can characterize the CT-maximal graphs by determining which lists are \emph{realizable}, i.e. which lists are the rank cardinality list for some cop-win graph.  Thus the fundamental issue in our paper becomes determining which lists are realizable and which are not.

In Section~\ref{RC} we determine enough about the realizability of lists to prove Theorem~\ref{gave}, our first main theorem, which we can restate using the following definition.

\begin{defn} \label{def_graph_n_t}
Let $\mathcal{G}_n^s$ be the set of cop-win graphs with $n$ vertices and capture time $s$.
\end{defn} 

\noindent
Theorem~\ref{gave} gives a characterization of 
$\mathcal{G}_n^{n-4}$.  In Section~\ref{morrc} we determine more about the realizability of lists, enough to prove Theorem~\ref{thm_n-5}, our second main theorem, which provides a characterization of $\mathcal{G}_n^{n-5}$.  To prove our two main theorems we partially characterize the realizable lists.  In Section~\ref{future} we suggest fully characterizing the realizable lists as a direction for future work, and mention some preliminary results.

\section{Corner Ranking}\label{cornran}

In this section we review the necessary results about corner rank from our paper \cite{OO16}. Since \cite{OO16} is currently unpublished, some proofs from that paper are included, albeit in a concise manner, in the appendix. 
For a full development, including proofs and examples, see \cite{OO16}. In this paper all numbers are integers, and all graphs are finite and non-empty, i.e. they have at least one vertex.
We follow a typical Cops and Robbers convention by assuming that all graphs are reflexive, that is all graphs have a loop at every vertex so that a vertex is always adjacent to itself
(In figures we never draw such edges).  
For a graph \graph{G}, $V(\graph{G})$ refers to the vertices of \graph{G} and $E(\graph{G})$ refers to the edges of \graph{G}.
If \graph{G} is a graph and $X$ is a vertex or set of vertices in \graph{G},
then by $\graph{G} - X$ we mean the subgraph of $\graph{G}$ induced by $V(\graph{G})\setminus X$.  Given a vertex $v$ in a graph, by the \dword{closed neighborhood of $v$}, denoted \clnb{v}, we mean the set of vertices consisting of $v$ and all the vertices adjacent to $v$. We say that a vertex $v$ \dword{dominates} a set of vertices $X$ if $X \subseteq \clnb{v}$.
For distinct vertices $v$ and $w$, if $\clnb{v} \subseteq \clnb{w}$ then
we say that $v$ is a \dword{corner} and that $w$ \dword{corners} $v$;  
if $\clnb{v} \subsetneq \clnb{w}$, we say that $v$ is a \dword{strict corner} and that $w$ \dword{strictly corners} $v$.
If $\clnb{v} = \clnb{w}$, we call $v$ and $w$ \dword{twins}.

A \dword{cop-win ordering} of a graph (also called a \dword{dismantling ordering}) \cite{NW83, Qui78} is produced by removing one corner at a time, until all the vertices have been removed (note that only cop-win graphs have cop-win orderings).  
As a small but significant modification of the cop-win ordering, rather than removing one corner at a time, we remove all the current strict corners simultaneously, assigning them a number we call the \emph{corner rank}.
In this paper, we only apply corner ranking to cop-win graphs,
though a more general approach is described in \cite{OO16}.

\begin{defn}{\bf (Corner Ranking Procedure)}
\label{def_corner_ranking}
For any cop-win graph \graph{G},
we define a corresponding  \dword{corner rank} function, $\mathrm{cr}$, which maps each vertex of $\graph{G}$ to a positive integer.
We also define a sequence of associated graphs $\uprank{\graph{G}}{1}, \ldots, \uprank{\graph{G}}{\alpha}$.

\begin{enumerate}
\setcounter{enumi}{-1}

\item Initialize $\uprank{\graph{G}}{1}=\graph{G}$, and $k = 1$.

\item 
\label{alg_start}
If $\uprank{\graph{G}}{k}$ is a clique, then:
\begin{itemize}

\item
 Let $\rfunct{x} = k$ for all $x \in V(\uprank{\graph{G}}{k})$.

\item
Then stop.

\end{itemize}


\item Else:  
\begin{itemize}
\item Let \X \ be the set of strict corners in \uprank{\graph{G}}{k}.
\item For all $x \in \X$, let  $\rfunct{x} = k$.  
\item Let $\uprank{\graph{G}}{k+1} = \uprank{\graph{G}}{k} - \X$. 
\item Increment $k$ by 1 and return to Step~\ref{alg_start}.
\end{itemize}
\end{enumerate}
Define the \dword{corner rank} of \graph{G}, denoted \rfunct{\graph{G}}, to be the same as a vertex of \graph{G}
with largest corner rank.

\end{defn}

The corner ranking procedure is well-defined, giving a corner rank to every vertex in a cop-win graph (See the appendix for a proof). As an example, we apply the corner ranking procedure to the graph $\graph{H}_7$, which is drawn in
two different ways in Figure~\ref{h7}. This graph was introduced in \cite{BGHK2009} and is typically drawn as the graph on the left in the figure. 
The corner ranking procedure begins by assigning the 
strict corner $d$ rank 1. After $d$ is removed, $c_1$ and $c_2$ are strict corners, 
and are thus assigned corner rank 2. Likewise, $b_1$ and $b_2$ are assigned corner rank 3.
After $b_1$ and $b_2$ are removed, the remaining vertices, $a_1$ and $a_2$,
form a clique and so are assigned corner rank 4. Thus the corner rank of the graph $\graph{H}_7$ is 4. The graph drawn on the right in Figure~\ref{h7} shows the graph $\graph{H}_7$ with
its corner rank structure more clearly displayed.

\begin{rem}
 In all figures, when a vertex $w$ has rank $k$ and is strictly cornered in \uprank{\graph{G}}{k} by a vertex $v$ of higher rank, we draw the edge $vw$ with a thick line.
Also, the number drawn inside a vertex indicates its corner rank. 
\end{rem}

\begin{figure}
\begin{center}
  \begin{tikzpicture}[every node/.style={circle,fill=black!20}, scale=.833]  
      \node (a2) at (2,0) [label=above left:$a_1$] {$4$};
      \node (a1) at (4,0) [label=above right:$a_2$] {$4$};
      \node (c1) at (0,0) [label=left:$c_2$] {$2$};
      \node (c2) at (6,0) [label=right:$c_1$] {$2$};
      \node (b1) at (3,-1.5) [label=left:$b_1$] {$3$};
      \node (b2) at (3,2) [label=right:$b_2$]  {$3$};
      \node (d) at (3,-3) [label=right:$d$]  {$1$};
    \foreach \from/\to in {a1/a2,  b1/c1, b1/c2, b2/c1, b2/c2, c1/d, c2/d}
    \draw (\from) -- (\to);
    \foreach \from/\to in {a1/b2, a1/b1, a2/b2, a2/b1, a2/c1, a1/c2, b1/d}
    \draw[ultra thick] (\from) -- (\to);
    \end{tikzpicture}
\hspace{.5in}
  \begin{tikzpicture}[every node/.style={circle,fill=black!20}, scale=1]  
      \node (a1) at (0,6) [label=left:$a_1$] {$4$};
      \node (a2) at (2,6) [label=right:$a_2$] {$4$};
      \node (b1) at (0,4.5) [label=left:$b_1$] {$3$};
      \node (b2) at (2,4.5) [label=right:$b_2$] {$3$};
      \node (c1) at (0,3) [label=left:$c_1$] {$2$};
      \node (c2) at (2,3) [label=right:$c_2$]  {$2$};
      \node (d) at (1,2) [label=right:$d$]  {$1$};
    \foreach \from/\to in {a1/a2,  b1/c1, b1/c2, b2/c1, b2/c2, c1/d, c2/d}
    \draw (\from) -- (\to);
    \foreach \from/\to in {a1/b2, a1/b1, a2/b2, a2/b1, a2/c1, a1/c2, b1/d}
    \draw[ultra thick] (\from) -- (\to);
    \end{tikzpicture}
\end{center}
\caption{Two representations of the graph $\graph{H}_7$.}
\label{h7}
\end{figure}
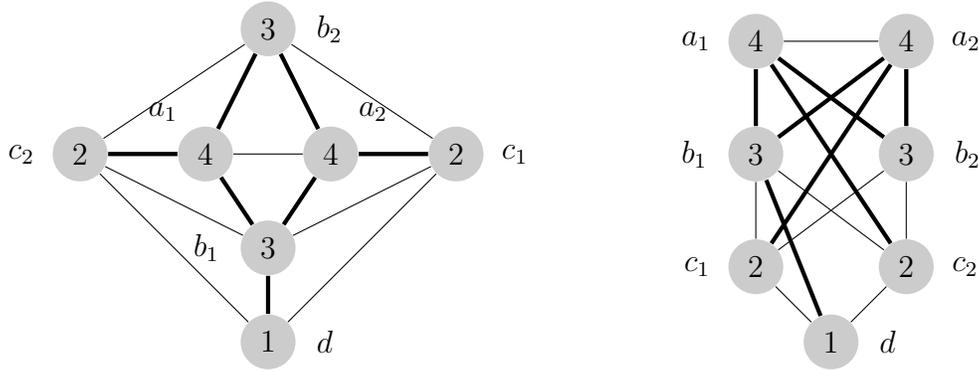

We define an important
structural property of the highest ranked vertices.

\begin{defn} \label{defn_topheavy}
Suppose \graph{G} is a cop-win graph with corner rank $\alpha$.
We say that \graph{G} is a \dword{\tp{1} graph} if \graph{G} has only one vertex, or if \graph{G} is a non-clique with some vertex of corner rank $\alpha$ dominating $V(\uprank{\graph{G}}{\alpha - 1})$. If \graph{G} is a clique with more than one vertex, or if \graph{G} is a non-clique with no vertex of corner rank $\alpha$ dominating $V(\uprank{\graph{G}}{\alpha - 1})$, we say \graph{G} is a \dword{\tp{0} graph}.
\end{defn}

\noindent
If \graph{G} has more than one vertex and 
is \tp{1}, then in fact every vertex of corner rank $\alpha$ dominates $V(\uprank{\graph{G}}{\alpha - 1})$
(a short proof is given in \cite{OO16}). 
For example, in Figure~\ref{2531} (ignore the caption for now), the graph on the left is \tp{1}, while the one on the right is \tp{0}. Note that in \cite{OO16} we used the name \emph{$t$-cop-win} in place of \tp{t}.

We now state a simplified version of the main result from \cite{OO16} (Theorem 6.1),
which relates the corner rank of a graph to its capture time. Since the theorem is proved in a currently unpublished manuscript, a succinct proof of the theorem is given in the
appendix. 

\begin{theorem} \label{thm_rank_capt_time}
For a \tp{t} cop-win graph \graph{G}, 
$\capt(\graph{G}) = \rfunct{\graph{G}} - t.$

\end{theorem}

\noindent
For example, the graph $\graph{H}_7$ in Figure~\ref{h7} is \tp{1} with corner rank 4, so it is cop-win with capture time $4-1=3$.

We give the following technical lemma the name
\upcorner, since we will use it so often. This lemma
follows immediately from Lemma 2.3 of \cite{OO16}, and
we also provide a proof in the appendix.

\begin{lemma}[\upcorner]
\label{higran}
If a vertex $v$ has corner rank $k$ in a graph $\graph{G}$ of rank larger than $k$, then $v$ is strictly cornered in \uprank{\graph{G}}{k} by a vertex of higher rank.

\end{lemma}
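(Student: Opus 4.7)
The plan is to show $v$ is strictly cornered in $\uprank{\graph{G}}{k}$ by iteratively walking up a chain of strict cornerings until we reach a vertex that survived to a later stage. The first observation is that because $\rfunct{\graph{G}} > k$, the corner ranking procedure did not halt at stage $k$, so stage $k$ must have proceeded through the ``else'' branch of Definition~\ref{def_corner_ranking}. Consequently, any vertex assigned rank $k$ was assigned it because it is a strict corner of $\uprank{\graph{G}}{k}$. In particular, $v$ itself is a strict corner of $\uprank{\graph{G}}{k}$, so there exists some $w \in \uprank{\graph{G}}{k}$ with $\clnb{v} \subsetneq \clnb{w}$ (neighborhoods taken in $\uprank{\graph{G}}{k}$). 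If $\rfunct{w} > k$ we are immediately done.

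If instead $\rfunct{w} = k$, then by the same observation $w$ is itself a strict corner of $\uprank{\graph{G}}{k}$, so there exists $w' \in \uprank{\graph{G}}{k}$ with $\clnb{w} \subsetneq \clnb{w'}$; by transitivity of strict set inclusion, $w'$ also strictly corners $v$. I would formalize this as building a maximal chain $v = v_0, v_1, v_2, \ldots, v_m$ in $\uprank{\graph{G}}{k}$ such that $\clnb{v_i} \subsetneq \clnb{v_{i+1}}$ for each $i$. The vertices in this chain are forced to be distinct (their closed neighborhoods form a strictly increasing sequence of sets), so finiteness of $\graph{G}$ guarantees the chain terminates.

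It remains to identify the rank of the terminal vertex $v_m$. Since $v_m$ belongs to $\uprank{\graph{G}}{k}$, we have $\rfunct{v_m} \ge k$. If $\rfunct{v_m}$ were equal to $k$, then by the opening observation $v_m$ would be a strict corner of $\uprank{\graph{G}}{k}$, contradicting the maximality of the chain. Therefore $\rfunct{v_m} > k$, and by transitivity $\clnb{v} \subsetneq \clnb{v_m}$ in $\uprank{\graph{G}}{k}$, completing the proof. The only mild subtlety is the initial step of ruling out the clique and ``stuck'' branches at stage $k$ (needed to ensure rank $k$ forces strict cornering); everything else is a short transitivity and finiteness argument, so I would not expect any serious obstacle.
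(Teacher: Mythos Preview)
Your argument is correct. The chain-climbing idea is exactly right: a vertex of rank $k$ is by definition a strict corner of $\uprank{\graph{G}}{k}$ (since the procedure reached the ``else'' branch), strict cornering is transitive because it is strict containment of closed neighborhoods, and a maximal strictly increasing chain of neighborhoods must end at a vertex that is not a strict corner and hence has rank exceeding $k$.

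Note, however, that the present paper does not actually supply a proof of this lemma; it is quoted from \cite{OO16} (Lemma~2.3 there) and used here as a black box. So there is no in-paper proof to compare against. Your write-up would serve perfectly well as a self-contained proof to insert if one were desired; the only cosmetic suggestion is that the chain argument can be compressed to a single line by observing that a maximal element under the strict-cornering partial order on $\uprank{\graph{G}}{k}$ cannot have rank $k$.
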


\section{Rank Cardinality Lists and Realizability}\label{RC}

Throughout this section, we assume that all graphs are cop-win, with corner rank at least 2.
For $1\le k \le \alpha$,  let $\X_k(\graph{G})$ denote the set of vertices of rank $k$ in a graph \graph{G}. We just write $\X_k$ if the graph is apparent from context.
By the term \dword{list}, we mean a finite list of positive integers.
A list $\vect{x} = (x_{\alpha}, x_{\alpha-1}, \ldots, x_1)$ has \dword{length} $\alpha$ and \dword{sum} $(x_\alpha + \cdots + x_1)$. Note that in our lists the indices decrease from $\alpha$ to 1, and for any number $x$, when we write $x, \ldots, x$ we mean a list of some number of $x$'s (at least one).

\begin{defn}
The  \dword{rank cardinality list} of a graph \graph{G} is the list $(x_{\alpha}, x_{\alpha-1}, \ldots, x_1)$, where for $k = 1, \ldots, \alpha$, $x_k$ is the number of vertices of rank $k$, i.e. $x_k = | \X_k |$.
\end{defn}

\begin{defn}
A list  $\vect{x} = (x_{\alpha}, x_{\alpha-1}, \ldots, x_1)$ is \dword{realizable} if it is the rank cardinality list of some cop-win graph \graph{G}.  
We say that \graph{G} \dword{realizes} \vect{x}, or that \vect{x} is \dword{realized by} \graph{G}.
For $t\in\{0,1\}$, \vect{x} is \dword{$t$-realizable} if there is a \tp{t} graph \graph{G} that realizes it.
We say that \graph{G} \dword{$t$-realizes} \vect{x}, or that \vect{x} is \dword{$t$-realized by} \graph{G}.

\end{defn}

\noindent
For example, the graph $\graph{H}_7$ in Figure~\ref{h7} realizes $(2,2,2,1)$, so since $\graph{H}_7$ is a \tp{1} graph, the list (2,2,2,1) is $1$-realizable.   We will see that some lists are not realizable. Since a $t$-realizable list with sum $n$ and length $\alpha$ corresponds to a \tp{t} graph on $n$ vertices with capture time $\alpha-t$, the answer to the following question would allow us to characterize $\mathcal{G}_n^{\alpha - t}$ and to determine $\capt(n)$.

\begin{question}\label{mainq}

For $t\in\{0,1\}$, which lists are $t$-realizable?

\end{question}

In this section, we answer this question to the extent necessary to give a proof of Theorem~\ref{gave}.  In Sections~\ref{morrc} and \ref{future}, we further explore this question and the general issue of realizability.

\subsection{Augmentations, Initial Segments, and Extensions}
We introduce three ways to alter a realizable list to obtain another realizable list: taking an augmentation, initial segment, or standard extension.

\begin{defn} \label{def_vecop}
Consider a list $(x_{\alpha}, \ldots, x_1)$.  
\begin{itemize}
\item If the list $(y_{\alpha}, \ldots, y_1)$ has the property that $x_i \le y_i$ for all $1\le i \le \alpha$, we say that $(y_{\alpha}, \ldots, y_1)$ is an \dword{augmentation} of $(x_{\alpha}, \ldots, x_1)$.

\item For $k \ge 1$, any list of the form $(x_{\alpha}, \ldots, x_k)$ is called an \dword{initial segment} of $(x_{\alpha}, \ldots, x_1)$.

\item Any list of the form $(x_{\alpha}, \ldots, x_1, z_1, z_2, \ldots, z_l)$ is called an \dword{extension} of $(x_{\alpha}, \ldots, x_1)$.  If $z_i=x_1$ for all $1 \le i \le l$, it is called a \dword{standard extension}.

\item
For all the notions (augmentation, initial segment, extension, and standard extension), we include the trivial case in which the list is unchanged.

\item  We say that  $\vect{x} \le \vect{y}$ if $\vect{y}$ is an augmentation (possibly trivial) of a standard extension (possibly trivial) of $\vect{x}$.

\end{itemize}
\end{defn}

\noindent
For example, a standard extension of $(3,2,2)$ is $(3,2,2,2,2)$ and an augmentation of  $(3,2,2,2,2)$ is $(5,2,6,2,3)$, so
$(3,2,2) \le (5,2,6,2,3)$.

\begin{proposition}\label{pump}
If a list 
is $t$-realizable, then so is any augmentation of it. 
\end{proposition}
\begin{proof}
It suffices to show that if  $\vect{x} =  (x_\alpha, \ldots, x_1)$ is $t$-realizable, then so is $\vect{y} = (y_\alpha, \ldots, y_1)$, where for some $k$, $y_k = x_k+1$, and for $j \neq k$, $y_j = x_j$.
Consider a graph \graph{G} which $t$-realizes $\vect{x}$. Choose a vertex $v \in \X_k$, and let $\graph{G}'$ be the graph obtained by adding a twin of $v$  to $\graph{G}$. Then $\graph{G}'$ $t$-realizes the list $\vect{y}$.
\end{proof}

If \graph{G} $t$-realizes the list $(x_\alpha, \ldots, x_1)$, then the initial segment $(x_\alpha, \ldots, x_k)$ is realized by \uprank{\graph{G}}{k}. Thus we obtain the following proposition.

\begin{proposition}\label{trunc}
If a list is $t$-realizable, then so is any initial segment of length 2 or more.
\end{proposition}

\begin{proposition}\label{extend}
Suppose $\vect{x} =  (x_\alpha, \ldots, x_1)$ and $\vect{y} =  (x_\alpha, \ldots, x_1, y_k, \ldots, y_1)$ is a standard extension. If \vect{x} is $t$-realizable then so is \vect{y}.
Moreover, if \graph{H} realizes \vect{x}, then there is a graph \graph{G} realizing \vect{y} such that $\uprank{\graph{G}}{k+1} = \graph{H}$.
\end{proposition}

\begin{proof}
It suffices to show that if  $\vect{x} =  (x_\alpha, \ldots, x_1)$ is $t$-realized by $\graph{H}$, then  $(x_\alpha, \ldots, x_1, x_1)$ is $t$-realized by some $\graph{G}$ where $\uprank{\graph{G}}{2}= \graph{H}$.  Suppose $\graph{H}$ $t$-realizes $\vect{x}$ with rank 1 vertices $v_1, \ldots, v_{x_1}$.  Let $\graph{G}$ be the graph obtained by adding the following to \graph{H}:
 vertices $w_1, \ldots, w_{x_1}$ and edges $v_1w_1,\ldots, v_{x_1}w_{x_1}$.  Then the vertices  $w_1, \ldots, w_{x_1}$ are the only strict corners in $\graph{G}$, the rank cardinality list of $\graph{G}$ is $(x_\alpha, \ldots, x_1, x_1)$, and $\uprank{\graph{G}}{2} = \graph{H}$.
\end{proof}

\noindent
From Propositions~\ref{pump} and \ref{extend}, we conclude the following.

\begin{cor}\label{augst}
For two lists $\vect{x}$ and $\vect{y}$ where $\vect{x} \le \vect{y}$, if \vect{x} is $t$-realizable, then \vect{y} is $t$-realizable.
\end{cor}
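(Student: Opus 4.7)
The plan is to chain together Lemmas~\ref{pump} and \ref{extend}, essentially unpacking the definition of $\le$. By Definition~\ref{def_vecop}, the hypothesis $\vect{x} \le \vect{y}$ means there is an intermediate vector $\vect{z}$ which is a (possibly trivial) standard extension of $\vect{x}$ and such that $\vect{y}$ is a (possibly trivial) augmentation of $\vect{z}$. So the argument is just: start with an $r$-realization of $\vect{x}$, apply Lemma~\ref{extend} to obtain an $r$-realization of $\vect{z}$, and then apply Lemma~\ref{pump} to obtain an $r$-realization of $\vect{y}$.

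Formally, I would write one or two sentences introducing $\vect{z}$ and then invoke the two lemmas in sequence. Note that Lemma~\ref{pump} as stated handles a single unit of augmentation in its proof and the general statement follows by iterating, so no further work is needed; similarly Lemma~\ref{extend} as stated already handles an arbitrary standard extension.

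The only thing worth checking, so that the "$r$-" qualifier is not lost along the way, is that both operations preserve the \tp{r} property of the realizing graph. This is immediate from the constructions in the two lemmas: the augmentation construction adds a twin of an existing vertex, which preserves all closed neighborhoods of already-present vertices and adds a new vertex whose closed neighborhood equals that of its twin, so the rank structure at the top and the domination condition defining \tp{r} are unchanged; the standard extension construction attaches new rank $1$ vertices $w_i$ each adjacent to a single existing rank $1$ vertex $v_i$, which leaves $\uprank{\graph{G}}{\alpha}$ and $\uprank{\graph{G}}{\alpha-1}$ identical to those of the original graph $\graph{H}$, so again the \tp{r} property is inherited. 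There is no real obstacle here; the corollary is essentially a packaging of the two lemmas, and the only mild subtlety is the preservation of the top-heavy type, which is built into how the two lemmas are stated.
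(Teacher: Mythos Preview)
Your proposal is correct and matches the paper's approach exactly: the paper simply states that the corollary follows from Lemmas~\ref{pump} and \ref{extend}, and you have unpacked precisely why. Your additional remarks on preserving the \tp{r} property are not needed, since both lemmas already conclude ``$r$-realizable'' in their statements, but they do no harm.
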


\noindent
As a special case, note that if $\vect{x} = (x_\alpha, \ldots, x_1)$ is $t$-realizable and $x_1 = 1$, then any extension of \vect{x} is $t$-realizable. We will often use the contrapositive form of Corollary~\ref{augst}: If $\vect{x} \le \vect{y}$, and \vect{y} is not $t$-realizable, then \vect{x} is not $t$-realizable. For example, in Corollary~\ref{cor_not12k1_not13k1} we show that for any $k$, the list $(1,3,k,1)$ is not realizable, which also implies that any list of the form $(1,2,k,1)$ is not realizable.

\subsection{Lists: Realizable and Not Realizable}

The main goal of this subsection is to prove a number of results about realizability: 1) some results show that a particular kind of list is realizable, 2) some results show that a particular kind of list is not realizable, and 
3) some results place restrictions on the structure of graphs realizing particular lists.  We begin with some technical results.

If the vertices of a cop-win graph are listed, beginning with all the corner rank 1 vertices, followed by all the corner rank 2 vertices, and so on, we arrive at a cop-win ordering
(repeatedly applying \upcorner \  shows that this is a cop-win ordering, though this fact is also proven as Lemma 6.5 of \cite{OO16}).  In a cop-win ordering, we can view each vertex as being retracted to a vertex later in the sequence which corners it.  It is well-known that this retract is isometric, thus the following proposition follows immediately (we name the proposition \pathcon, since we will want to refer to it often).

\begin{proposition}[\pathcon] 
\label{lonhin}
If $v$ and $w$ are vertices in \graph{G} of rank $k$ where the shortest path from $v$ to $w$ in \uprank{\graph{G}}{k} has length $m$, then there is no path from $v$ to $w$ in \graph{G} of length less than $m$.
\end{proposition}

\noindent
Proposition~\ref{lonhin} will be used as a tool to show many configurations are impossible.  For example, if $v$ and $w$ are nonadjacent vertices of rank $k$ without a common neighbor of rank $k$ or higher, they cannot have a common neighbor at all.


\begin{proposition}\label{nbrrank}
Suppose $v$ is a vertex of rank $k>1$.  Then for every vertex $w$ that strictly corners $v$ in \uprank{\graph{G}}{k}, $v$ must have a neighbor of rank $k-1$ that is not adjacent to $w$.
\end{proposition}

\begin{proof}
If not, then there is a vertex $w$ that strictly corners $v$ in \uprank{\graph{G}}{k-1}, contradicting the assumption that $v$ has rank $k$.
\end{proof}

\begin{cor}\label{xk1}
In a graph with rank $\alpha$, every vertex of rank $k>1$ has at least one neighbor of rank $k-1$. 
In particular, if there is  exactly one vertex $v$ of rank $k$, for some $k < \alpha$, then $v$ is adjacent to all the vertices of rank $k+1$.
\end{cor}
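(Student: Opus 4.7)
The plan is to split the first claim into two cases according to whether $k < \alpha$ or $k = \alpha$, and then derive the second claim from the first. For $k < \alpha$, the result is almost immediate from tools already in the excerpt: since $v$ has rank $k$ strictly smaller than $\rfunct{\graph{G}} = \alpha$, \upcorner (Lemma~\ref{higran}) produces a vertex $w$ of rank greater than $k$ that strictly corners $v$ in $\uprank{\graph{G}}{k}$. Applying Lemma~\ref{nbrrank} to this $w$ then immediately gives $v$ a neighbor of rank $k - 1$.

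The case $k = \alpha$ is the main obstacle, because no rank-$\alpha$ vertex is strictly cornered in the clique $\uprank{\graph{G}}{\alpha}$, so Lemma~\ref{nbrrank} does not apply directly to $v$ itself. My plan is to first exhibit at least one rank-$\alpha$ vertex with a rank-$(\alpha - 1)$ neighbor, and then propagate this to all rank-$\alpha$ vertices by a strict-cornering argument. Since $\alpha \geq 2$, the set $S$ of rank-$(\alpha - 1)$ vertices is nonempty, so I can apply \upcorner to any $u \in S$: this produces some $w$ of rank greater than $\alpha - 1$ that strictly corners $u$ in $\uprank{\graph{G}}{\alpha - 1}$. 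The only possibility is $\rfunct{w} = \alpha$, so $w$ lies in $T := V(\uprank{\graph{G}}{\alpha})$ and is adjacent to $u$. Now suppose for contradiction that some $v \in T$ has no neighbor in $S$. Since $T$ is a clique and $V(\uprank{\graph{G}}{\alpha - 1}) = T \cup S$, the closed neighborhood of $v$ in $\uprank{\graph{G}}{\alpha - 1}$ is then exactly $T$. But the closed neighborhood of $w$ in $\uprank{\graph{G}}{\alpha - 1}$ contains $T \cup \{u\}$, a proper superset of $T$, so $w$ would strictly corner $v$ in $\uprank{\graph{G}}{\alpha - 1}$, contradicting that $v$ has rank $\alpha$.

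For the ``In particular'' statement, I will apply the first part to the rank-$(k + 1)$ vertices, which is legal since $k + 1 \in \{2, \ldots, \alpha\}$: every vertex of rank $k + 1$ has a neighbor of rank $k$. Because $v$ is assumed to be the unique rank-$k$ vertex, any rank-$k$ neighbor of a rank-$(k+1)$ vertex must be $v$, so every rank-$(k+1)$ vertex is adjacent to $v$, as required.
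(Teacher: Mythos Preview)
Your argument is correct and matches the paper's intended derivation: the paper gives no proof, presenting the result as an immediate corollary of Lemma~\ref{nbrrank} (together with \upcorner), and your treatment of the case $k<\alpha$ and of the ``in particular'' clause is exactly that. You are right that the case $k=\alpha$ is not literally covered by Lemma~\ref{nbrrank}, since no vertex strictly corners $v$ in the clique $\uprank{\graph{G}}{\alpha}$; your short argument for that case---producing via \upcorner\ a rank-$\alpha$ vertex $w$ adjacent to some rank-$(\alpha-1)$ vertex, then observing that any $v\in\X_\alpha$ with no rank-$(\alpha-1)$ neighbor would be strictly cornered by $w$ in $\uprank{\graph{G}}{\alpha-1}$---cleanly fills a small gap the paper glosses over.
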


\begin{proposition}\label{nox2d}
In a graph with rank $\alpha$, no vertex of rank $\alpha-1$ can dominate $\X_{\alpha-1}$.
\end{proposition}

\begin{proof}
Suppose some vertex $b$  of rank $\alpha-1$ dominates $\X_{\alpha-1}$.
By \upcorner, let $a$ be a vertex of rank $\alpha$ that strictly corners $b$ in $\uprank{\graph{G}}{\alpha-1}$. Then $a$ must also dominate $\X_{\alpha-1}$, making $\graph{G}$ \tp{1}. In a \tp{1} graph, every vertex of rank $\alpha$ dominates $\X_{\alpha-1}$, and so $b$ is adjacent to every vertex of rank $\alpha$.  Thus $b$ is adjacent to every vertex in \uprank{\graph{G}}{\alpha-1}, contradicting the assumption that $a$ strictly corners $b$ in 
\uprank{\graph{G}}{\alpha-1}.
\end{proof}

\begin{cor}\label{norca-1}
No list $(x_\alpha, \ldots, x_1)$ with $x_{\alpha-1} = 1$ is realizable.
\end{cor}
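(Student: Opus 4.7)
The plan is to prove this as a nearly immediate consequence of Lemma~\ref{nox2d}, together with the convention that all graphs in the paper are reflexive.

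Suppose, for contradiction, that some cop-win graph $\graph{G}$ of rank $\alpha$ realizes a vector $(x_\alpha, \ldots, x_1)$ with $x_{\alpha-1} = 1$. Then $\X_{\alpha-1}$ consists of a single vertex, call it $b$. Because the paper's standing convention is that every vertex has a loop and so is adjacent to itself, $b$ is adjacent to every vertex of $\X_{\alpha-1} = \{b\}$. By the definition of ``dominates,'' this means $b$ dominates $\X_{\alpha-1}$. But Lemma~\ref{nox2d} says that no vertex of rank $\alpha-1$ can dominate $\X_{\alpha-1}$, a contradiction.

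There is essentially no obstacle here: the content of the corollary lives entirely in Lemma~\ref{nox2d}, and the only point to be careful about is the bookkeeping on reflexive loops, which make the singleton case trivially dominated. I would keep the write-up to two or three sentences, explicitly invoking the reflexivity convention so the reader sees why the singleton $\{b\}$ counts as dominated by $b$.
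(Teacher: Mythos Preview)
Your proof is correct and takes essentially the same approach as the paper: the corollary is stated immediately after Lemma~\ref{nox2d} with no separate proof, and the intended deduction is exactly that a lone rank $\alpha-1$ vertex trivially dominates $\X_{\alpha-1}$ by reflexivity, contradicting the lemma.
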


\begin{proposition}\label{norca-2}
No list $(x_\alpha, \ldots, x_1)$ with $x_{\alpha-2} = 1$ is realizable.
\end{proposition}
\begin{proof}
Suppose $\graph{G}$ is a graph realizing $(x_{\alpha}, x_{\alpha-1}, \ldots, x_1)$, where $x_{\alpha-2} = 1$, and $c$ is the unique vertex of rank $\alpha-2$. By Corollary~\ref{xk1}, $\X_{\alpha-1} \subseteq \clnb{c}$. 
By \upcorner, some vertex $x$ of rank at least $\alpha - 1$ strictly corners $c$ in \uprank{\graph{G}}{\alpha - 2}.
If $x \in \X_{\alpha-1}$, then $x$ dominates $\X_{\alpha-1}$, which contradicts Proposition~\ref{nox2d}. If $x \in \X_{\alpha}$, then $x$ is adjacent to every vertex in \uprank{\graph{G}}{\alpha-2} and either strictly corners or is a twin of every other vertex. Thus \uprank{\graph{G}}{\alpha - 2} has rank at most 2, which contradicts the assumption that \uprank{\graph{G}}{\alpha - 2} has rank 3. 
\end{proof}

While the set of realizable lists includes lists that are not $0$-realizable, the set of realizable lists is in fact the same as the set of $1$-realizable lists.

\begin{proposition}\label{all1real}
Every realizable list of length 2 or more is $1$-realizable.
\end{proposition}

\begin{proof}
Suppose $\vect{x} = (x_\alpha, \ldots, x_1)$ is a realizable list, realized by \graph{G}.  If $x_\alpha=1$,  then \graph{G} must be \tp{1} so 
\vect{x} is $1$-realizable (though not $0$-realizable). Suppose $x_\alpha>1$.  By Corollary~\ref{norca-1} and Proposition~\ref{norca-2}, 
$x_{\alpha - 1}$ and $x_{\alpha - 2}$ are each greater than $1$.   By Proposition~\ref{pump}, it suffices to show that we can $1$-realize $(2,2)$, $(2,2,2)$, and any list of the form $(2,2,2,1,\ldots,1)$.  
Since all of these lists are initial segments or standard extensions of $(2,2,2,1)$, which is realized by  the \tp{1} graph $\graph{H}_7$ (see Figure~\ref{h7}), they are all $1$-realizable.
\end{proof}


\begin{lemma}\label{uoddp}  \
\begin{itemize}
\item[(i)] The list $(1,2, \ldots, 2)$ of length $\alpha$ is uniquely realized by $\graph{P}_{2\alpha -1}$.
\item[(ii)] The list $(1,2, \ldots,  2, 1)$ is not realizable.
\item[(iii)] The list $(2, \ldots, 2)$ of length $\alpha$ is uniquely $0$-realized by $\graph{P}_{2\alpha}$.
\item[(iv)] The list $(2, \ldots, 2, 1)$ is not $0$-realizable.
\end{itemize}
\end{lemma}

\begin{proof} \
{\bf Proof of (i)}: The statement is true by inspection for $\alpha =2$.  
It is clear that $\graph{P}_{2\alpha -1}$ realizes $(1,2, \ldots, 2)$.
We proceed by induction, with base case $\alpha =3$, to show the uniqueness.  

Base case ($\alpha=3$): Consider
any graph $\graph{G}$ realizing $(1,2,2)$; suppose $\X_3 = \{a\}$, $\X_2 = \{b_1, b_2\}$, and $\X_1 = \{c_1, c_2\}$.  The list $(1,2)$ is uniquely realized by $\graph{P}_3$, so $b_1$ and $b_2$ are not adjacent.  If they are both adjacent to $c_1$, then by \upcorner \ $a$ must strictly corner $c_1$.  In order for $b_1$ and $b_2$ to not be strictly cornered by $a$ in $\graph{G}$, they must each be adjacent to $c_2$ and $a$ must not. But then no vertex of rank 2 or 3 strictly corners $c_2$, contradicting \upcorner. Thus  each vertex of rank 2 has a unique neighbor of rank 1, so we assume that $b_1c_1, b_2c_2 \in E(\graph{G})$, while $b_1c_2, b_2c_1 \notin E(\graph{G})$. By Proposition~\ref{nbrrank}, $a$ cannot be adjacent to either $c_1$ or $c_2$, and thus for $i=1,2$, by Lemma~\ref{higran}, $c_i$ must be strictly cornered by $b_i$.  Thus $c_1c_2 \notin E(\graph{G})$, and $\graph{G} = \graph{P}_5$.

Inductive step: Now consider a graph \graph{G} with rank $\alpha \ge 4$ realizing the list $(1,2,\ldots ,2)$.  By the inductive hypothesis, $\uprank{\graph{G}}{2} = \graph{P}_{2\alpha-3} = (v_1, v_2, \ldots, v_{2\alpha-3})$. Since $\alpha \ge 4$, the shortest path in \uprank{\graph{G}}{2} between  $v_1$ and $v_{2\alpha-3}$ (which are the two rank 2 vertices in \graph{G}) has length at least four.  Let $y$ and $z$ be the two rank 1 vertices in \graph{G}  (see Figure~\ref{pp122}). By Proposition~\ref{nbrrank}, $v_1$ and $v_{2\alpha-3}$ must each be adjacent to some rank 1 vertex.  However, by \pathcon, $v_1$ and $v_{2\alpha-3}$ cannot both be adjacent to the same rank 1 vertex in \graph{G}, and furthermore, $y$ and $z$ cannot be adjacent, or else there is a path of length 2 or 3 between $v_1$ and $v_{2\alpha-3}$ in \graph{G}.  Thus without loss of generality, assume $yv_1, zv_{2\alpha - 3} \in E(\graph{G})$ and
$zv_1, yv_{2\alpha - 3} \not\in E(\graph{G})$.  To show that $\graph{G} = \graph{P}_{2 \alpha - 1}$ we just need to rule out edges of the form $yv_i$, where $v_i$ has rank at least 3 (an analogous discussion holds for $z$).  Suppose there is an edge $yv_i \in E(\graph{G})$ where $v_i$ has rank at least 3.  Then the vertex that strictly corners $y$ in \graph{G} is not $v_1$, but must be adjacent to $v_1$, and so must be $v_2$.  But in this case $v_2$ strictly corners $v_1$ in \graph{G}, contradicting the assumption that $v_1$ has rank 2.  So no edges from higher rank vertices to $y$ or $z$ are possible, and $\graph{G}= \graph{P}_{2\alpha-1}$.

{\bf Proof of (ii)}: Corollary~\ref{norca-1} and Proposition~\ref{norca-2} imply that $(1,1)$ and $(1,2,1)$ are not realizable.  For $\alpha \ge 4$, if \graph{G} is a graph realizing $(x_\alpha, \ldots, x_1)$ with $x_\alpha = x_1 = 1$  and $x_k = 2$ for $2 \le k < \alpha$, then by (i), $\uprank{\graph{G}}{2} = \graph{P}_{2\alpha-3}$ and the two rank 2 vertices 
$u$ and $v$ in \graph{G} have distance $2\alpha-4 \ge 4$ in \uprank{\graph{G}}{2}.
If there were one vertex of rank 1, then by Corollary~\ref{xk1} the rank 1 
vertex is adjacent to both $u$ and $v$, yielding a length 2 path from $u$ to $v$, contradicting \pathcon.

{\bf Proof of (iii)}: This proof is almost the same as the proof of (i), but now with a base case stating that $(2,2,2)$ is uniquely $0$-realized by $\graph{P}_6$; the proof of the base case is a similar technical proof to that of the base case for $(1,2,2)$. 

{\bf Proof of (iv)}: This proof is the same as the proof of (ii), using (iii) instead of (i).

\end{proof}

\begin{figure}
\begin{center}
    \begin{tikzpicture}[every node/.style={circle,fill=black}, scale=.8]  
      \node (v1) at (6,1) [label=above:$v_1$]{};
      \node (v2) at (4,1) [label=above:$v_2$] {};
      \node (v3) at (2,1) {};
      \node (v4) at (-2,.5) {};
      \node (v8) at (0,1) {};
      \node (v9) at (0,0) {};
     \node (v5) at (2,0) {};
      \node (v6) at (4,0) [label=below:$v_{2\alpha-4}$]{};
      \node (v7) at (6,0) [label=below:$v_{2\alpha-3}$]{};
      \node (y) at (8,1) [label=above:$y$]{};
       \node (z) at (8,0)  [label=below:$z$]{};
    \foreach \from/\to in {v1/v2, v9/v5, v6/v7, v3/v8, v8/v4, v4/v9}
    \draw[ultra thick] (\from) -- (\to);
    \draw[dotted] (v2) -- (v3);
    \draw[dotted] (v5) -- (v6);
    \end{tikzpicture}
\end{center}
\caption{The unique graph realizing $(1,2, \ldots, 2)$ is $\graph{P}_{2\alpha-1}$.}\label{pp122}
\end{figure}
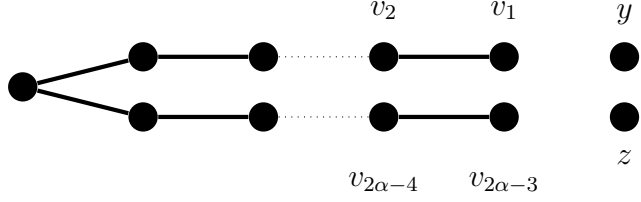

We now turn our attention to graphs with rank 4.
\begin{lemma} \label{lem_rank3or4}
If a graph realizes $(a,b,c,1)$ then there is a vertex of rank $3$ or $4$ that dominates the rank $2$ vertices.
\end{lemma}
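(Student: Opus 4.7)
The plan is to locate the unique rank-1 vertex and argue that it already dominates $\X_2$, then use Upward Cornering (Lemma~\ref{higran}) to ``lift'' this domination up to a vertex of rank $3$ or $4$.

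First I would let $d$ denote the unique rank-$1$ vertex. By Corollary~\ref{xk1}, every rank-$2$ vertex has a neighbor of rank $1$, and since $d$ is the only such vertex, every vertex of $\X_2$ is adjacent to $d$; that is, $d$ itself dominates $\X_2$. Now apply Upward Cornering to $d$ in $\graph{G}$, which has rank $4>1$: there is a vertex $w$ of rank at least $2$ with $\clnb{d}\subseteq \clnb{w}$. Since $\X_2\subseteq \clnb{d}$, it follows that $w$ dominates $\X_2$ as well. If $w$ has rank $3$ or $4$, we are done.

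The remaining case is when $w$ has rank $2$. Here I would pass to the induced subgraph $\uprank{\graph{G}}{2}$, which has rank $3$, and apply Upward Cornering to $w$ in $\uprank{\graph{G}}{2}$: there is a vertex $w'$ of rank $3$ or $4$ that strictly corners $w$ in $\uprank{\graph{G}}{2}$. The point to check is that the property ``$w$ is adjacent to every vertex of $\X_2$'' transfers from $\graph{G}$ to the induced subgraph; this is immediate because $\X_2\subseteq V(\uprank{\graph{G}}{2})$, so no relevant edges are lost when restricting. Therefore the closed neighborhood of $w$ in $\uprank{\graph{G}}{2}$ contains $\X_2$, and since this neighborhood is contained in the closed neighborhood of $w'$ in $\uprank{\graph{G}}{2}$, the vertex $w'$ is also adjacent to every vertex of $\X_2$, i.e.\ $w'$ dominates $\X_2$.

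There is no real obstacle here; the only mild subtlety is book-keeping about which graph the closed neighborhoods are computed in when invoking Upward Cornering for the second time. Once one observes that $\X_2$ survives intact inside $\uprank{\graph{G}}{2}$, the two applications of Lemma~\ref{higran} chain together cleanly to produce the required rank-$3$-or-$4$ vertex dominating $\X_2$.
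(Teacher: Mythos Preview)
Your proof is correct and follows essentially the same route as the paper's: start from the unique rank-$1$ vertex $d$, use Corollary~\ref{xk1} to see $d$ dominates $\X_2$, apply \upcorner\ once to lift this to some $w$ of higher rank, and if $w\in\X_2$ apply \upcorner\ again in $\uprank{\graph{G}}{2}$ to reach a vertex of rank $3$ or $4$. The only difference is cosmetic---you spell out the book-keeping about neighborhoods in $\uprank{\graph{G}}{2}$ that the paper leaves implicit.
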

\begin{proof}
Let \graph{G} be the graph and
let $d$ be the lone vertex in $\X_1$.  By Corollary~\ref{xk1}, $\X_2 \subseteq \clnb{d}$.  By \upcorner, \ some vertex $x$ of rank greater than 1 must strictly corner $d$, so $\X_2 \subseteq \clnb{x}$.  If $x \in \X_2$, then by \upcorner \ let $y$ be a vertex of rank at least 3 that strictly corners $x$ in \uprank{\graph{G}}{2},
otherwise let $y=x$. In either case, we have a vertex $y$ in either $\X_3$ or $\X_4$ such that $\X_2 \subseteq \clnb{y}$.
\end{proof}

\begin{lemma}\label{1mk1}
Suppose a graph realizes $(1,m,k,1)$.  Then the subgraph induced by the rank $3$ vertices is connected.
\end{lemma}

\begin{proof}
Let \graph{G} be the graph and 
let \graph{H} be the subgraph induced by the rank 3 vertices.
Assume for the sake of contradiction that the claim is false.
Suppose $a$ is the rank 4 vertex, two components of \graph{H} have vertex sets $B_1$ and $B_2$, and for $i=1,2$, $b_i \in B_i$.  By Proposition~\ref{nbrrank}, there must be a rank 2 vertex $c_1$ adjacent to $b_1$ but not to $a$.  Since $b_1$ is only adjacent to rank 3 vertices in $B_1$, by \upcorner, $c_1$ must be strictly cornered in \uprank{\graph{G}}{2} by a vertex in $B_1$ and thus $c_1$ is only adjacent to rank 3 vertices in $B_1$. Similarly, there is a rank 2 vertex $c_2$ that is adjacent to $b_2$, but not to $a$;
likewise, $c_2$ is only adjacent to rank 3 vertices in $B_2$.  If $c_1$ and $c_2$ are adjacent or have a common neighbor $c$ of rank 2 then the vertex of higher rank (which we have by \upcorner) that strictly corners $c$ (or $c_1$ if $c_1$ and $c_2$ are adjacent) in \uprank{\graph{G}}{2} would have to be adjacent to both $c_1$ and $c_2$. 
However, no such higher rank vertex exists since it would have to be in both $B_1$ and $B_2$, but these sets are disjoint.
Thus $c_1$ and $c_2$ are at distance at least three in \uprank{\graph{G}}{2}, and by \pathcon, they cannot both be adjacent to the single rank 1 vertex, contradicting Corollary~\ref{xk1}.
\end{proof}

\noindent
Since the graph induced by the rank 1 vertices of any graph realizing $(1,3)$ or $(1,2)$ is not connected, Lemma~\ref{1mk1} implies the following corollary.

\begin{cor} \label{cor_not12k1_not13k1}
For all $k \ge 1$, the lists $(1,2,k,1)$ and
$(1,3,k,1)$ are not realizable.

\end{cor}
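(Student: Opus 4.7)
The plan is to combine Theorem~\ref{1mk1} with a small case analysis of graphs realizing $(1,3)$. Suppose for contradiction that some graph $\graph{G}$ realizes $(1,3,k,1)$ for some $k\ge 1$. By Lemma~\ref{trunc}, the initial segment $(1,3)$ is realized by $\uprank{\graph{G}}{3}$, so I would first pin down the structure of any graph realizing $(1,3)$.

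In such a graph $\graph{H}$, there is a single rank-$2$ vertex $a$ and three rank-$1$ vertices $b_1,b_2,b_3$, each of which must be a strict corner of some other vertex. The only candidates to strictly corner a $b_i$ are $a$ or some $b_j$ adjacent to $b_i$. I would argue that the induced subgraph $\graph{H}[\{b_1,b_2,b_3\}]$ cannot be connected: if it were a triangle, all three $b_i$'s would have the same closed neighborhood as $a$, making them twins rather than strict corners; if it were a path $b_1-b_2-b_3$, then the middle vertex $b_2$ would have the same closed neighborhood as $a$, again failing to be a strict corner. (A brief verification actually shows $\graph{H}=K_{1,3}$.) Hence the induced subgraph on the rank-$1$ vertices of any graph realizing $(1,3)$ is disconnected.

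Returning to $\graph{G}$: the rank-$3$ vertices of $\graph{G}$ coincide with the rank-$1$ vertices of $\uprank{\graph{G}}{3}$, and the edges among them are identical in $\graph{G}$ and in $\uprank{\graph{G}}{3}$. So the subgraph of $\graph{G}$ induced by its rank-$3$ vertices is disconnected. But Theorem~\ref{1mk1}, applied with $m=3$, requires this subgraph to be connected, yielding the desired contradiction.

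The main content is the short case analysis classifying $(1,3)$-realizing graphs (or at least ruling out connected configurations among their rank-$1$ vertices). Once that is established, the corollary follows immediately from Lemma~\ref{trunc} and Theorem~\ref{1mk1}, with no further obstacle.
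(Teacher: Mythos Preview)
Your argument matches the paper's: it simply asserts that the rank-$1$ vertices of any graph realizing $(1,3)$ induce a disconnected subgraph, then invokes Theorem~\ref{1mk1}. Two small points: your case analysis tacitly assumes $a$ is adjacent to every $b_i$, which follows from \upcorner\ (the unique higher-rank vertex must strictly corner each $b_i$); and the parenthetical claim that $\graph{H}=K_{1,3}$ is not quite right---adding a single edge $b_1b_2$ still realizes $(1,3)$---but this does not affect the disconnectedness conclusion, so the proof stands.
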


\begin{lemma}\label{2mk1con} \
\begin{itemize}
\item[(i)] For $k \ge 1$, the list $(2,4,k,1)$ is not $0$-realizable.
\item[(ii)] The list $(2,5,2,1)$ is not $0$-realizable.
\end{itemize}
\end{lemma}

\begin{proof}
The proofs of (i) and (ii) are similar, with only some differences at the end. Consider for the sake of contradiction a graph \graph{G} that 0-realizes $(2,4,k,1)$ or $(2,5,2,1)$. 
Since \graph{G} is a \tp{0} graph and $\X_4 = \{a_1, a_2\}$  has only two vertices, there are rank 3 vertices $b_1$ and $b_2$ such that $a_1b_1, a_2b_2 \in E(\graph{G})$ and $a_1b_2, a_2b_1 \not\in E(\graph{G})$.  For $i=1,2$,  $a_i$ must strictly corner $b_i$ and every rank 3 neighbor of $b_i$ in \uprank{\graph{G}}{3}; we will use this point throughout the proof.
Since no rank 4 vertex is adjacent to both $b_1$ and $b_2$, they can  share no common neighbors in \uprank{\graph{G}}{3}
(since no rank 4 vertex could corner such a vertex in \uprank{\graph{G}}{3}), and by \pathcon, $b_1$ and $b_2$ must be at distance at least 3 in \graph{G}.
For $i=1,2$, let $c_i$ be a rank 2 vertex adjacent to $b_i$ but not $a_i$, which must exist by Proposition~\ref{nbrrank}. Since the distance between $b_1$ and $b_2$ is at least 3, $c_1$ and $c_2$ must be distinct vertices, and $b_1c_2, b_2c_1 \not\in E(\graph{G})$.  
 
Since no vertex of rank 4 dominates $\X_2$, by Lemma~\ref{lem_rank3or4}, there is a vertex $b_3$ of rank 3 that dominates $\X_2$, and $b_3$ is not $b_1$ or $b_2$.  Without loss of generality suppose $a_2$ corners $b_3$ in 
\uprank{\graph{G}}{3}.  
Now consider what corners $c_1$ in \uprank{\graph{G}}{2}: neither $a_i$, not $b_2$ because it is not adjacent to $c_1$, and neither $b_1$ nor $b_3$ since that would force $b_1$ and $b_3$ to be neighbors and would imply $a_2$ is adjacent to $b_1$, a contradiction.
So a fourth distinct rank 3 vertex $b_4$ must corner $c_1$ in \uprank{\graph{G}}{2}, and thus $b_4$ must be adjacent to both $b_1$ and $b_3$.

\emph{To finish the proof for (i):} 
Now consider what vertex of rank at least 3 strictly corners $c_2$ in \uprank{\graph{G}}{2}.  Since the distance from $b_1$ to $b_2$ is at least 3, neither of $b_1$ or $b_4$ can be adjacent to $b_2$ and thus neither of these vertices can corner $c_2$.  Neither vertex of rank 4 works since $a_1$ is not adjacent to $b_2$ and $a_2$ is not adjacent to $c_2$.  So $b_2$ or $b_3$ strictly corners $c_2$ in \uprank{\graph{G}}{2}, and are thus adjacent to each other. But now $b_2$ is strictly cornered by $b_3$ in \uprank{\graph{G}}{2},
since they have the same neighbors in \uprank{\graph{G}}{2}, except that $b_3$ is adjacent to $c_1$ and $b_4$, while $b_2$ is not.

\emph{To finish the proof for (ii):} 
Since $a_2$ is not adjacent to $b_1$, $a_1$ must corner $b_4$ in \uprank{\graph{G}}{3}, so in particular $a_1$ and $b_4$ are adjacent.
Since $b_1$ is not strictly cornered by $b_4$ in \uprank{\graph{G}}{2},  it must be adjacent to the fifth rank 3 vertex $b_5$, while $b_4$ and $b_5$ are not adjacent. 
Since $b_4$ corners $c_1$, $b_5$ is not adjacent to $c_1$, so by Proposition~\ref{nbrrank}, $b_5$ must be adjacent to $c_2$.
Since $a_1$ must strictly corner $b_5$ in \uprank{\graph{G}}{3}, $b_5$ is not adjacent to $b_2$, and thus $b_3$ is the only vertex that can strictly corner $c_2$ in \uprank{\graph{G}}{2}.  But then $b_3$ is adjacent to the rank 3 vertices $b_2$, $b_4$, and $b_5$, and thus also $a_1$.  Thus in \uprank{\graph{G}}{3}, $b_3$ has at least the neighbors that $a_2$ has, contradicting the fact that $a_2$ strictly corners $b_3$ in \uprank{\graph{G}}{3}.
\end{proof}

\begin{lemma}\label{m2k1}
For any $m, k \ge 1$, $(m,2,k,1)$ is not $0$-realizable.
\end{lemma}

\begin{proof}
For the sake of contradiction, suppose \graph{G} $0$-realizes $(m,2,k,1)$. Let $\X_3 = \{b_1, b_2\}$ and note that every rank 4 vertex is adjacent to exactly one of these two vertices.  Thus $b_1b_2 \not\in E(\graph{G})$, and these two vertices are at distance 3 in \uprank{\graph{G}}{3} and hence in \graph{G}. Thus by \pathcon \ they share no rank two neighbors. 
By Lemma~\ref{lem_rank3or4}, there is a vertex $x$ 
of rank 3 or 4 that dominates $\X_2$. 
 Since $b_1$ and $b_2$ must both have rank 2 neighbors but can't have any in common, neither of these vertices can be $x$.  Thus $x$ must be a rank 4 vertex.  But if $x$ is adjacent to $b_i$, then it strictly corners $b_i$ in \uprank{\graph{G}}{2}, contradicting the assumption that $b_i$ has rank 3.
\end{proof}

\junk{
\begin{defn}
Define $\graph{H}_7$ to  be the graph with vertex set $\{a_1, a_2, b_1, b_2, c_1, c_2, d\}$ and edge set $\{a_1a_2, a_1b_1, a_1b_2, a_2b_2, a_2b_1, a_2c_1, a_1c_2, b_1c_1, b_1c_2, b_1d, b_2c_1, b_2c_2, c_1d, c_2d\}$. 
\end{defn}
}

\begin{lemma}\label{H7only}
The list (2,2,2,1) is uniquely realized by the graph $\graph{H}_7$.
\end{lemma}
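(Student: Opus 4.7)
My plan is to rigidify $\graph{G}$ layer by layer, first its top (the rank-$3$ and rank-$4$ subgraph) and then its rank-$2$ and rank-$1$ neighborhoods, using Lemma~\ref{nbrrank} together with a case analysis of who strictly corners whom in $\uprank{\graph{G}}{2}$. I would label $\X_4=\{a_1,a_2\}$, $\X_3=\{b_1,b_2\}$, $\X_2=\{c_1,c_2\}$, $\X_1=\{d\}$. From $\uprank{\graph{G}}{4}=K_2$ we have $a_1 a_2\in E$. By Theorem~\ref{uoddp}(iv), $(2,2,2,1)$ is not $0$-realizable, so $\graph{G}$ must be \tp{1}, which gives all four edges $a_i b_j$; also $b_1 b_2\notin E$, since otherwise $\uprank{\graph{G}}{3}=K_4$ and all four of its vertices would receive rank $4$. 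Corollary~\ref{xk1} gives $dc_1, dc_2\in E$.

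Next, I would apply Lemma~\ref{nbrrank} to each $b_i$ (which is strictly cornered in $\uprank{\graph{G}}{3}$ by each $a_k$) to conclude that $b_i$ has a rank-$2$ neighbor avoiding $a_1$ and one avoiding $a_2$; in particular, no $a_k$ can be adjacent to both $c_1$ and $c_2$. By Lemma~\ref{lem_rank3or4}, some rank-$3$ vertex dominates $\{c_1,c_2\}$, so after relabeling I may assume $b_1 c_1, b_1 c_2\in E$. The case in which $b_2$ is adjacent to only one $c_j$, say $c_1$, can then be ruled out: Lemma~\ref{nbrrank} for $b_2$ forces $a_1 c_1, a_2 c_1\notin E$, so in $\uprank{\graph{G}}{2}$ we have $\{b_1,b_2\}\subseteq\clnb{c_1}$ with $\clnb{c_1}$ containing no $a_k$, and then no vertex strictly corners $c_1$ --- not any $b_j$ (since $b_1 b_2\notin E$), not any $a_k$ (since $c_1\notin\clnb{a_k}$), and not $c_2$ (since $b_2\notin\clnb{c_2}$). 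So $b_2$ is adjacent to both $c_j$ as well.

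With both $b_i$ adjacent to both $c_j$, no $b_i$ can strictly corner any $c_j$ in $\uprank{\graph{G}}{2}$ (again because $b_1 b_2\notin E$ but $\{b_1,b_2\}\subseteq\clnb{c_j}$), so each $c_j$ must be strictly cornered by some $a_i$. Combining the subset condition $\clnb{c_j}\subseteq\clnb{a_i}$ with the fact that each $a_i$ is adjacent to at most one $c_k$ forces $c_1 c_2\notin E$, and the strictness of the cornering then forces a perfect matching between the $a_i$ and the $c_j$; after relabeling, $a_1 c_2, a_2 c_1\in E$ are the only $a$-$c$ edges. Finally, set $S = \clnb{d}\cap(\X_3\cup\X_4)$. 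Lemma~\ref{nbrrank} applied to each $c_j$ with its cornerer $a_{3-j}$ gives $a_1, a_2\notin\clnb{d}$, so $S\subseteq\{b_1,b_2\}$. Any vertex strictly cornering $d$ must contain $\{d,c_1,c_2\}\cup S$ in its closed neighborhood; no $c_j$ works (since $c_1 c_2\notin E$), no $a_k$ works (since $a_k\notin\clnb{d}$), so the cornerer is some $b_i\in S$, and $b_1 b_2\notin E$ then forces $S=\{b_i\}$. After relabeling, $S=\{b_1\}$, and the edge set of $\graph{G}$ matches that of $\graph{H}_7$ exactly.

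The main obstacle will be the cornering analysis for the $c_j$ in $\uprank{\graph{G}}{2}$, which requires keeping track of the interaction between the $a_ic_j$, $b_ic_j$, and $c_1c_2$ edges while applying Lemma~\ref{nbrrank} in both directions (to rule out potential cornerers in $\uprank{\graph{G}}{2}$, and to derive non-adjacencies to $d$). The recurring observation that closes each case is that $b_1 b_2\notin E$ prevents any $b_i$ from containing $\{b_1,b_2\}$ in its closed neighborhood, while every would-be strict cornerer of $c_j$ in the ``both $b$'s adjacent to both $c$'s'' sub-case must.
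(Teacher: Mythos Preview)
Your proof is correct and follows essentially the same approach as the paper: both use Theorem~\ref{uoddp} to force the graph to be \tp{1}, invoke Lemma~\ref{lem_rank3or4} to get a rank-3 vertex dominating $\X_2$, and then run a cornering case analysis in $\uprank{\graph{G}}{2}$ using \upcorner\ and Lemma~\ref{nbrrank}. The only difference is order of deductions---you first establish that \emph{both} $b_i$ dominate $\X_2$ and that no $a_k$ can, whereas the paper fixes one $b$-$c$ adjacency and argues via non-domination of $a_2$ in $\uprank{\graph{G}}{2}$---but the ingredients and structure are the same.
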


\begin{proof}
Recall that the graph $\graph{H}_7$ is displayed in Figure~\ref{h7}.
Let \graph{G} be a graph that realizes $(2,2,2,1)$, with $\X_4 = \{a_1, a_2\}$, $\X_3 = \{b_1, b_2\}$, $\X_2 = \{c_1, c_2\}$, and $\X_1 = \{d\}$.  Lemma~\ref{uoddp} implies that $(2,2,2,1)$ is not $0$-realizable. Thus  \uprank{\graph{G}}{3} must contain the edges $a_1a_2$, $a_1b_1$, $a_2b_2$, $a_1b_2$, $a_2b_1$, and since \uprank{\graph{G}}{3} is not a clique, there is not an edge $b_1b_2$. By Corollary~\ref{xk1}, each of $b_1$ and $b_2$ must be adjacent to a vertex of rank 2, and Lemma~\ref{lem_rank3or4} implies that some vertex $x$ of rank 3 or 4 dominates $\X_2$.  If $x$ were some $a_i$, then $x$ would strictly corner each rank 3 vertex in \uprank{\graph{G}}{2}, a contradiction. Thus without loss of generality we may assume $b_1$ dominates $\X_2$ and both $b_1$ and $b_2$ are adjacent to $c_1$. Then only a vertex from $\X_4$ can strictly corner $c_1$ in \uprank{\graph{G}}{2}; without loss of generality, suppose $a_2$ is this vertex, so in particular, $a_2$ is adjacent to $c_1$. Since $a_2$ is not a dominating vertex in \uprank{\graph{G}}{2}, it cannot be adjacent to $c_2$ and thus $c_1$ and $c_2$ cannot be adjacent. For $a_2$ not to strictly corner $b_2$ or $a_1$ in \uprank{\graph{G}}{2}, each of these vertices must be adjacent to $c_2$, and $a_1$ cannot be adjacent to $c_1$ or else it dominates \uprank{\graph{G}}{2}. Thus $\uprank{\graph{G}}{2}= \uprank{(\graph{H}_7)}{2}$.

By Corollary~\ref{xk1}, the rank 1 vertex $d$ is adjacent to both $c_1$ and $c_2$, which means it can only be strictly cornered by some $b_i$, without loss of generality, $b_1$.  Since the rank 3 vertices are not adjacent, $d$ cannot be adjacent to $b_2$.  
To see that $d$ is not adjacent to any rank 4 vertices, 
first note that from the above discussion, we can conclude that in \uprank{\graph{G}}{2},
$a_2$ strictly corners $c_1$ and $a_1$ strictly corners $c_2$.
Then by Proposition~\ref{nbrrank} (applied with $w = a_2, v = c_1$, and then $w = a_1, v = c_2$),  $d$ cannot be adjacent to either $a_1$ or $a_2$.
Thus \graph{G} is $\graph{H}_7$.
\end{proof}

\section{A Characterization of $\mathcal{G}^{n-4}_n$, the CT-Maximal Graphs}\label{nvertsec}

We can now characterize the rank cardinality lists of all the CT-maximal graphs. The following definition will be used to classify the CT-maximal graphs having at least seven vertices.
\begin{defn} \label{def_h7+}
For $k \ge 0$, define $\mathcal{H}_7^{+k}$ to be a set of graphs 
that realize the length $k+4$ list of the form $(2,2,2,1, \ldots, 1)$.
Let $\mathcal{H}_7^+$ be  
$\bigcup_{k \ge 0} \mathcal{H}_7^{+k}$.
\end{defn}
\noindent
For example, Lemma~\ref{H7only} implies that $\mathcal{H}_7^{+0} = \{\graph{H}_7\}$.  Figure~\ref{h7+} displays some of the graphs in $\mathcal{H}_7^{+1}$. By Proposition~\ref{extend}, any standard extension of $(2,2,2,1)$ is realizable, so for each $k$, $\mathcal{H}_7^{+k}$ is non-empty. In \cite{Gav2010}, $\mathcal{M}$ is defined to be the set of CT-maximal graphs.  We will see (in Theorem~\ref{gave})  that for graphs with order at least 9, the graphs of $\mathcal{H}_7^+$  are exactly the graphs  in
$\mathcal{M}$.  In Theorem 2 of \cite{Gav2010} a nice, but somewhat involved characterization of $\mathcal{M}$ is given (stated to be true for $n \ge 8$, but actually true for $n \ge 9$).  Our result gives a simpler characterization (for $n \ge 9$): A graph is in $\mathcal{M}$ exactly when it realizes $(2,2,2,1, \ldots, 1)$.
In Theorem 2 of \cite{Gav2010}, Gavenciak demonstrates that various properties hold for
the graphs in $\mathcal{M}$ of order 9 and larger. 
Our approach also demonstrates that these properties hold.  The properties follow immediately from our characterization of   
$\mathcal{M}$ as equaling $\mathcal{H}_7^+$ (in Theorem~\ref{gave}), together with the next
theorem.

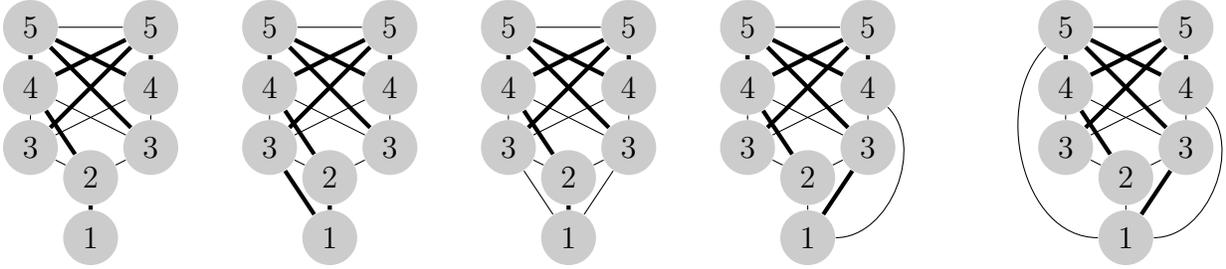
\begin{figure}

\begin{center}
  \begin{tikzpicture}[every node/.style={circle,fill=black!20}, scale=.8] 
      \node (a1) at (0,5)  {$5$};
      \node (a2) at (2,5)  {$5$};
      \node (b1) at (0,4)  {$4$};
      \node (b2) at (2,4)  {$4$};
      \node (c1) at (0,3)  {$3$};
      \node (c2) at (2,3)   {$3$};
      \node (d) at (1,2.5)   {$2$};
     \node (e) at (1,1.5)   {$1$};
    \foreach \from/\to in {a1/a2,  b1/c1, b1/c2, b2/c1, b2/c2, c1/d, c2/d}
    \draw (\from) -- (\to);
    \foreach \from/\to in {a1/b2, a1/b1, a2/b2, a2/b1, a2/c1, a1/c2, b1/d, d/e}
    \draw[ultra thick] (\from) -- (\to);
    \end{tikzpicture}
\hfill
  \begin{tikzpicture}[every node/.style={circle,fill=black!20}, scale=.8] 
      \node (a1) at (0,5)  {$5$};
      \node (a2) at (2,5)  {$5$};
      \node (b1) at (0,4)  {$4$};
      \node (b2) at (2,4)  {$4$};
      \node (c1) at (0,3)  {$3$};
      \node (c2) at (2,3)   {$3$};
      \node (d) at (1,2.5)   {$2$};
     \node (e) at (1,1.5)  {$1$};
    \foreach \from/\to in {a1/a2,  b1/c1, b1/c2, b2/c1, b2/c2, c1/d, c2/d}
    \draw (\from) -- (\to);
    \foreach \from/\to in {a1/b2, a1/b1, a2/b2, a2/b1, a2/c1, a1/c2, b1/d, c1/e, d/e}
    \draw[ultra thick] (\from) -- (\to);
    \end{tikzpicture}
\hfill
  \begin{tikzpicture}[every node/.style={circle,fill=black!20}, scale=.8] 
      \node (a1) at (0,5)  {$5$};
      \node (a2) at (2,5)  {$5$};
      \node (b1) at (0,4)  {$4$};
      \node (b2) at (2,4)  {$4$};
      \node (c1) at (0,3)  {$3$};
      \node (c2) at (2,3)   {$3$};
      \node (d) at (1,2.5)   {$2$};
     \node (e) at (1,1.5)   {$1$};
    \foreach \from/\to in {a1/a2,  b1/c1, b1/c2, b2/c1, b2/c2, c1/d, c2/d, c1/e, c2/e}
    \draw (\from) -- (\to);
    \foreach \from/\to in {a1/b2, a1/b1, a2/b2, a2/b1, a2/c1, a1/c2, b1/d, d/e}
    \draw[ultra thick] (\from) -- (\to);
    \end{tikzpicture}
    \hfill
  \begin{tikzpicture}[every node/.style={circle,fill=black!20}, scale=.8] 
      \node (a1) at (0,5)  {$5$};
      \node (a2) at (2,5)  {$5$};
      \node (b1) at (0,4)  {$4$};
      \node (b2) at (2,4)  {$4$};
      \node (c1) at (0,3)  {$3$};
      \node (c2) at (2,3)   {$3$};
      \node (d) at (1,2.5)   {$2$};
     \node (e) at (1,1.5)   {$1$};
    \foreach \from/\to in {a1/a2,  b1/c1, b1/c2, b2/c1, b2/c2, c1/d, c2/d, d/e}
    \draw (\from) -- (\to);
    \foreach \from/\to in {a1/b2, a1/b1, a2/b2, a2/b1, a2/c1, a1/c2, b1/d, c2/e}
    \draw[ultra thick] (\from) -- (\to);
        \draw  (e) to [out=0,in=315] (b2);
    \end{tikzpicture}
        \hfill
  \begin{tikzpicture}[every node/.style={circle,fill=black!20}, scale=.8] 
      \node (a1) at (0,5)  {$5$};
      \node (a2) at (2,5)  {$5$};
      \node (b1) at (0,4)  {$4$};
      \node (b2) at (2,4)  {$4$};
      \node (c1) at (0,3)  {$3$};
      \node (c2) at (2,3)   {$3$};
      \node (d) at (1,2.5)   {$2$};
     \node (e) at (1,1.5)   {$1$};
    \foreach \from/\to in {a1/a2,  b1/c1, b1/c2, b2/c1, b2/c2, c1/d, c2/d, d/e}
    \draw (\from) -- (\to);
    \foreach \from/\to in {a1/b2, a1/b1, a2/b2, a2/b1, a2/c1, a1/c2, b1/d, c2/e}
    \draw[ultra thick] (\from) -- (\to);
      \draw  (e) to [out=0,in=315] (b2);
      \draw  (e) to [out=180,in=225] (a1);
    \end{tikzpicture}
\end{center}
\caption{Some graphs in $\mathcal{H}_7^{+1}$.}\label{h7+}
\end{figure}

\begin{theorem} \label{thm_H7}
Suppose \graph{G} is a graph on $n$ vertices in $\mathcal{H}_7^+$.  Then

\begin{itemize}

\item[(i)] \uprank{\graph{G}}{\alpha - 3}  is $\graph{H}_7$.


\item[(ii)] $\capt(\graph{G}) = n - 4$.

\end{itemize}

\end{theorem}

\begin{proof}
Property (i) follows from Lemma~\ref{H7only}. 
For Property (ii), note that \graph{G} has rank $n - 3$ and is \tp{1}.  
Thus by Theorem~\ref{thm_rank_capt_time}, \graph{G} has capture time $(n - 3) - 1 = n-4$.
\end{proof}

The next theorem restates the main results of \cite{Gav2010}, with an alternative proof that does not use a computer search.


\begin{theorem} \label{gave}
For $n \ge 7$, $\capt(n) = n - 4$, and for graphs on at least $9$ vertices, the CT-maximal graphs are exactly the graphs in
$\mathcal{H}_7^{+}$.  Furthermore, in Table~\ref{table_thm}, we describe $\capt(n)$ and the CT-maximal graphs for $n \le 8$.

\begin{table}[h!]
  \centering

\begin{tabular}{l|l|l}
$n$  & $\capt(n)$ & CT-Maximal Graphs with $n$ vertices\\ \hline
1 & 0 & $\graph{P}_1$\\ 
2 & 1 & $\graph{P}_2$\\
3 & 1 & $\graph{P}_3$, $\graph{K}_3$ \\
4 & 2 & $\graph{P}_4$\\
5 &  2 & $\graph{P}_5$ and the \tp{0} graphs realizing (2,3) and (3,2)\\
6 & 3 & $\graph{P}_6$ \\
7 & 3 & $\graph{P}_7$, $\graph{H}_7$, and the \tp{0} graphs realizing 
(2,2,3), (2,3,2), (3,2,2) \\
8 & 4 & $\graph{P}_8$ and any graph in $\mathcal{H}_7^{+1}$ 
\end{tabular}  
\caption{CT-Maximal graphs with at most 8 vertices and their capture time} \label{table_thm}
\end{table}

\end{theorem}

\vspace{3mm}

\begin{proof}
The bulk of the proof will focus on the part of the theorem which classifies the structure of CT-Maximal graphs.  Once this structural result is demonstrated, we can
quickly conclude that 
$\capt(n) = n - 4$ for $n \ge 7$.
By Theorem~\ref{thm_H7}, any graph in $\mathcal{H}_7^+$ with $n$ vertices has capture time $n - 4$, as required. The 
other relevant graphs (the ones of order 7 and 8
listed in Table~\ref{table_thm}) all have the required capture time, since the paths $\graph{P}_7$ and $\graph{P}_8$, and the 
\tp{0} graphs of order 7 in Table~\ref{table_thm} all have capture time 3 by Theorem~\ref{thm_rank_capt_time}.

Now we prove the structural classification, first for graphs
where $n \le 8$ and then for graphs where $n \ge 9$.
For the case of $n \le 8$, consider lists realized by $\graph{P}_n$. 
By Lemma~\ref{uoddp}, when  $n$ is even, $\graph{P}_n$ is the unique \tp{0} graph realizing the length $n/2$ list $(2, \ldots, 2)$, and when  $n$ is odd, $\graph{P}_n$ is the unique \tp{1} graph realizing the length $\lceil n/2 \rceil$ list  $(1,2, \ldots, 2)$.  
Thus when $n$ is even, graphs whose rank cardinality list has length less than $n/2$ cannot be CT-maximal, and when $n$ is odd, graphs whose rank cardinality list has length less than $\lfloor n/2 \rfloor$ cannot be CT-maximal.
Based on this observation, Table~\ref{table_proof} lists all lists with sum $n \le 8$ that could possibly be the rank cardinality list of some CT-maximal graph;
by Corollary~\ref{norca-1} and Proposition~\ref{norca-2}, we exclude the lists whose second or third entry is 1.
Note that the first list (in {\bf bold}) is the rank cardinality list for the corresponding path
$\graph{P}_n$.

\begin{table}[h!]
  \centering

\begin{tabular}{l|l}
$n$  &Vectors \\ \hline
1 & {\bf (1)} \\ 
2 & {\bf (2)} \\
3 &  {\bf (1,2)}, (3) \\
4 & {\bf (2,2)}, (1,3) \\
5 & {\bf (1,2,2)}, (1,4), (3,2), (2,3) \\
6 & {\bf (2,2,2)}, (1,3,2), (1,2,3), (1,2,2,1) \\ 
7 & {\bf (1,2,2,2)}, (2,2,2,1), (2,2,3), (2,3,2), (3,2,2), (1,2,2,1,1), (1,3,2,1), (1,2,3,1) \\
8 & {\bf (2,2,2,2)}, (2,2,2,1,1)
\end{tabular}

  \caption{Vectors with sum $n\le 8$ and length at least $\lfloor n/2 \rfloor$.}\label{table_proof}

\end{table}

To prove the theorem for  $n \le 8$, it suffices to show that each list is either: 1) not realizable, 2) has capture time less than that of $\graph{P}_n$, or 3) is accounted for in Table~\ref{table_thm}.
We proceed by cases on the values of $n \le 8$, employing Theorem~\ref{thm_rank_capt_time} and using the immediate fact that if the first entry is 1,
then a graph that realizes the list must be \tp{1}.
 At various points in this proof all we need to show is that some list is realizable; in some of those cases, as an interesting tangent, we claim that the list is uniquely realized, or we produce all the graphs realizing the list.

\begin{itemize}

\item
For $n=1,2,3$, all the lists in Table~\ref{table_proof} have corresponding graphs listed in Table~\ref{table_thm}.

\item
For $n = 4$, a graph realizing $(1,3)$ has capture time $1 < 2$, so it is not CT-maximal.

\item
For $n=5$, besides $(1,2,2)$,
the lists in Table~\ref{table_proof} have length less than 3, so they can only have capture time 2 if they are \tp{0}. Thus we also get as CT-maximal graphs the unique graph $0$-realizing $(3,2)$ and the three graphs $0$-realizing $(2,3)$. (See Figure~\ref{2332}.)

\begin{figure}
\begin{center}
\begin{tikzpicture}[every node/.style={circle,fill=black!20}, scale=.65]  
      \node (a1) at (2.5,2){$2$};
      \node (a2) at (5.5,2){$2$};
      \node (a3) at (4,2.5){$2$};
      \node (b1) at (3.25,1) {$1$};
      \node (b2) at (4.75,1) {$1$};
    \foreach \from/\to in {a1/a2,a1/a3, a2/a3, a1/b1, a2/b2, a3/b1}
    \draw (\from) -- (\to);
    \foreach \from/\to in {a1/b1, a3/b1, a2/b2}
 \draw[ultra thick] (\from) -- (\to);
    \end{tikzpicture}
   \begin{tikzpicture}[every node/.style={circle,fill=black!20}, scale=.65]  
      \node (a1) at (1,2){$2$};
      \node (a2) at (3,2){$2$};
      \node (b1) at (0,1) {$1$};
      \node (b2) at (2,1) {$1$};
      \node (b3) at (4,1) {$1$};
    \foreach \from/\to in {a1/a2,a1/b1, a1/b2, a2/b3}
    \draw (\from) -- (\to);
        \foreach \from/\to in {a1/b1, a1/b2, a2/b3}
    \draw[ultra thick] (\from) -- (\to);
    \end{tikzpicture}
    \begin{tikzpicture}[every node/.style={circle,fill=black!20}, scale=.65]  
      \node (a1) at (1,2){$2$};
      \node (a2) at (3,2){$2$};
      \node (b1) at (0,1) {$1$};
      \node (b2) at (2,1) {$1$};
      \node (b3) at (4,1) {$1$};
    \foreach \from/\to in {a1/a2,a1/b1, a1/b2, a2/b3, b1/b2}
    \draw (\from) -- (\to);
            \foreach \from/\to in {a1/b1, a1/b2, a2/b3}
    \draw[ultra thick] (\from) -- (\to);
    \end{tikzpicture}
       \begin{tikzpicture}[every node/.style={circle,fill=black!20}, scale=.65]  
      \node (a1) at (1,2){$2$};
      \node (a2) at (3,2){$2$};
      \node (b1) at (0,1) {$1$};
      \node (b2) at (2,1) {$1$};
      \node (b3) at (4,1) {$1$};
    \foreach \from/\to in {a1/a2,a1/b1, a1/b2, a2/b3, a2/b2}
    \draw (\from) -- (\to);
            \foreach \from/\to in {a1/b1, a1/b2, a2/b2, a2/b3}
    \draw[ultra thick] (\from) -- (\to);
        \end{tikzpicture}
        \end{center}
\caption{The unique graph $0$-realizing $(3,2)$ and the three graphs $0$-realizing $(2,3)$.}\label{2332}
 \end{figure}
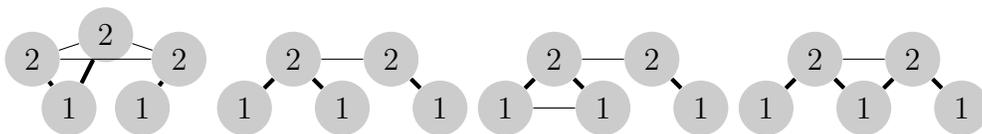

\item
For $n = 6$, the only list, besides $(2,2,2)$, corresponding to a capture time of  3 or greater is
$(1,2,2,1)$, but that list is not realizable, by Lemma~\ref{uoddp}.

\item
For $n = 7$, the list $(2,2,2,1)$ is uniquely realized by $\graph{H}_7$, using Lemma~\ref{H7only}.  To achieve the required capture time of 3, we can also take one of the five graphs $0$-realizing $(2,2,3)$ or one of the unique graphs $0$-realizing $(2,3,2),$ or $(3,2,2)$. (See Figure~\ref{223232}.)
The rest of the lists are not realizable:
$(1,2,2,1,1)$ is not realizable by Lemma~\ref{uoddp}, and
$(1,3,2,1)$ and $(1,2,3,1)$ are not realizable by Corollary~\ref{cor_not12k1_not13k1}.

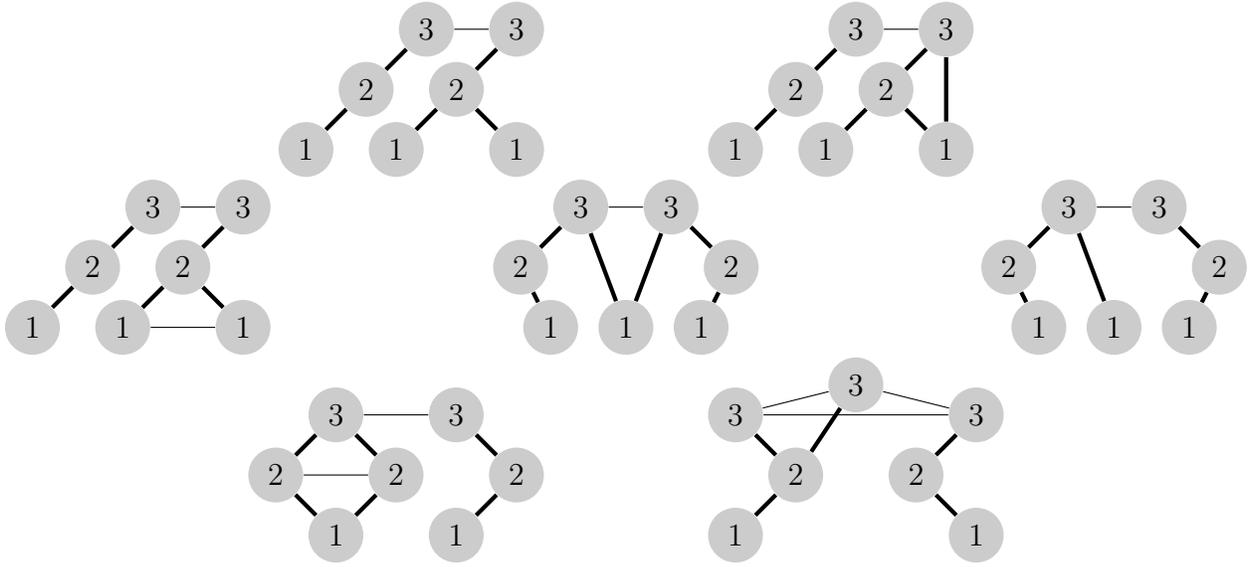
\begin{figure}
\begin{center}
 \begin{tikzpicture}[every node/.style={circle,fill=black!20}, scale=.8]  
      \node (a1) at (1,2){$3$};
      \node (a2) at (2.5,2){$3$};
      \node (b1) at (0,1) {$2$};
      \node (b2) at (1.5,1) {$2$};
      \node (c1) at (-1,0) {$1$};
      \node (c2) at (.5,0) {$1$};
      \node (c3) at (2.5,0) {$1$};
    \foreach \from/\to in {a1/a2}
    \draw (\from) -- (\to);
        \foreach \from/\to in {a1/b1, a2/b2, b1/c1, b2/c2,b2/c3}
    \draw[ultra thick] (\from) -- (\to);
    \end{tikzpicture}
    \hspace{.75in}
     \begin{tikzpicture}[every node/.style={circle,fill=black!20}, scale=.8]  
      \node (a1) at (1,2){$3$};
      \node (a2) at (2.5,2){$3$};
      \node (b1) at (0,1) {$2$};
      \node (b2) at (1.5,1) {$2$};
      \node (c1) at (-1,0) {$1$};
      \node (c2) at (.5,0) {$1$};
      \node (c3) at (2.5,0) {$1$};
    \foreach \from/\to in {a1/a2}
    \draw (\from) -- (\to);
        \foreach \from/\to in {a1/b1, a2/b2, b1/c1, b2/c2,b2/c3,a2/c3}
    \draw[ultra thick] (\from) -- (\to);
    \end{tikzpicture}\\
         \begin{tikzpicture}[every node/.style={circle,fill=black!20}, scale=.8]  
      \node (a1) at (1,2){$3$};
      \node (a2) at (2.5,2){$3$};
      \node (b1) at (0,1) {$2$};
      \node (b2) at (1.5,1) {$2$};
      \node (c1) at (-1,0) {$1$};
      \node (c2) at (.5,0) {$1$};
      \node (c3) at (2.5,0) {$1$};
    \foreach \from/\to in {a1/a2,c2/c3}
    \draw (\from) -- (\to);
        \foreach \from/\to in {a1/b1, a2/b2, b1/c1, b2/c2,b2/c3}
    \draw[ultra thick] (\from) -- (\to);
    \end{tikzpicture}
    \hfill
     \begin{tikzpicture}[every node/.style={circle,fill=black!20}, scale=.8]  
      \node (a1) at (1,2){$3$};
      \node (a2) at (2.5,2){$3$};
      \node (b1) at (0,1) {$2$};
      \node (b2) at (3.5,1) {$2$};
      \node (c1) at (.5,0) {$1$};
      \node (c2) at (1.75,0) {$1$};
      \node (c3) at (3,0) {$1$};
    \foreach \from/\to in {a1/a2}
    \draw (\from) -- (\to);
        \foreach \from/\to in {a1/b1, a2/b2, b1/c1, b2/c3,a1/c2, a2/c2}
    \draw[ultra thick] (\from) -- (\to);
    \end{tikzpicture}
    \hfill
    \begin{tikzpicture}[every node/.style={circle,fill=black!20}, scale=.8]  
      \node (a1) at (1,2){$3$};
      \node (a2) at (2.5,2){$3$};
      \node (b1) at (0,1) {$2$};
      \node (b2) at (3.5,1) {$2$};
      \node (c1) at (.5,0) {$1$};
      \node (c2) at (1.75,0) {$1$};
      \node (c3) at (3,0) {$1$};
    \foreach \from/\to in {a1/a2}
    \draw (\from) -- (\to);
        \foreach \from/\to in {a1/b1, a2/b2, b1/c1, b2/c3,a1/c2}
    \draw[ultra thick] (\from) -- (\to);
    \end{tikzpicture}\\
\begin{tikzpicture}[every node/.style={circle,fill=black!20}, scale=.8]  
      \node (a1) at (1,2){$3$};
      \node (a2) at (3,2){$3$};
      \node (b1) at (0,1) {$2$};
      \node (b2) at (2,1) {$2$};
      \node (b3) at (4,1) {$2$};
      \node (c1) at (1,0) {$1$};
      \node (c2) at (3,0) {$1$};
    \foreach \from/\to in {a1/a2, b1/b2}
    \draw (\from) -- (\to);
        \foreach \from/\to in {a1/b1, a1/b2, a2/b3, b1/c1, b2/c1, b3/c2}
    \draw[ultra thick] (\from) -- (\to);
    \end{tikzpicture}
    \hspace{.75in}
     \begin{tikzpicture}[every node/.style={circle,fill=black!20}, scale=.8]  
      \node (a1) at (2,2){$3$};
      \node (a2) at (6,2){$3$};
      \node (a3) at (4,2.5){$3$};
      \node (b1) at (3,1) {$2$};
      \node (b2) at (5,1) {$2$};
     \node (c1) at (2,0) {$1$};
      \node (c2) at (6,0) {$1$};
    \foreach \from/\to in {a1/a2,a1/a3, a2/a3}
    \draw (\from) -- (\to);
    \foreach \from/\to in {a1/b1, a2/b2, a3/b1, b1/c1, b2/c2}
    \draw[ultra thick] (\from) -- (\to);
    \end{tikzpicture}
    \end{center}
\caption{{\bf Top}: The five graphs $0$-realizing $(2,2,3)$.  
{\bf Bottom}: The unique graphs $0$-realizing $(2,3,2)$ and $(3,2,2)$.}\label{223232}
 \end{figure}

\item
For $n = 8$, by definition, the list $(2,2,2,1,1)$ is only realized by graphs from $\mathcal{H}_7^{+1}$.

\end{itemize}

Now we consider $n \ge 9$. We show that $\graph{H}_7^{+(n-7)}$ contains all the CT-maximal graphs.
For $\graph{H}_7^{+(n-7)}$ not to contain all the CT-maximal graphs we would need a realizable list 
$\vect{x} = (x_{\alpha}, \ldots, x_1)$ besides 
$(2,2,2, 1, 1, \ldots, 1)$ with one of the following properties.

\begin{itemize}

\item Type 0:
$\alpha \ge n -4$ and \vect{x} is $0$-realizable.

\item Type 1:
$\alpha \ge n -3$ and \vect{x} is $1$-realizable.

\end{itemize}
We show that no such lists are realizable.  Keep in mind that in both cases $x_{\alpha -1}$ and $x_{\alpha -2}$ must be at least 2 by Corollary~\ref{norca-1} and Proposition~\ref{norca-2}.

We rule out the Type 0 lists.
Let $\vect{y} = (y_{\alpha}, \ldots, y_1)$ be the list
$(2,2,2,1, \ldots, 1)$.  Being $0$-realizable, $x_{\alpha} \ge 2$.
Since $\alpha \ge n-4$, such an \vect{x} would be an augmentation of \vect{y} where all entries of \vect{x} are the same as the entries of \vect{y} with the possible exception of one entry of \vect{y}, which is one larger than its corresponding entry in \vect{x}.  No matter where the 1 is added, or if nothing is added, one of the following lists must be an initial segment of \vect{x}:
(3,2,2,1), (2,3,2,1), (2,2,3,1), (2,2,2,1) or (2,2,2,2,1).
The first and third lists are not $0$-realizable by Lemma~\ref{m2k1}, and the second is not $0$-realizable by Lemma~\ref{2mk1con};
the last two lists are not $0$-realizable by Lemma~\ref{uoddp}.

Now we rule out the Type 1 lists. 
Let $\vect{y} = (y_{\alpha}, \ldots, y_1)$ be the list
$(1,2,2,1, \ldots, 1)$.  Since $\alpha \ge n-3$, such an \vect{x} would be an augmentation of \vect{y} where all entries of \vect{x} are the same as the entries of \vect{y} with the possible exception of one entry of \vect{y}, which is one larger than its corresponding entry in \vect{x}.   The value 1 cannot be added to $y_{\alpha}$ since that would mean \graph{G} is in $\mathcal{H}_7^{+}$.  No matter where else 1 is added, or if nothing is added, one of the following lists must be an initial segment of \vect{x}:
(1,2,2,1), (1,2,2,2,1), (1,3,2,1), (1,2,3,1).
By Lemma~\ref{uoddp} the first two lists are not realizable, and by  Corollary~\ref{cor_not12k1_not13k1}, the last two lists are not realizable.

\end{proof}

\section{A Characterization of $\mathcal{G}_{n-5}^n$}\label{morrc}

Before we prove our second main result, Theorem~\ref{thm_n-5}, we need the following lemma.

\begin{lemma}\label{pathlike}
Let $x_\alpha, \ldots, x_1$ have the property that $x_j= 3$ for some $j>1$, and $x_i=2$ for all $i\neq j$. Then
\begin{itemize}
\item[(i)] There is exactly one graph that realizes $(1, x_\alpha, \ldots, x_1)$. 
\item[(ii)] $(1, x_\alpha, \ldots, x_1,1)$ is not realizable. 
\item[(iii)] There is exactly one graph that $0$-realizes $(x_\alpha, \ldots, x_1)$.
\item[(iv)] $(x_\alpha, \ldots, x_1,1)$ is not $0$-realizable.
\end{itemize}
\end{lemma}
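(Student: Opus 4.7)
The plan is to prove all four parts by induction on $\alpha$, in the same spirit as the proof of Theorem~\ref{uoddp}, and to identify in each case the expected unique realization as a canonical ``path with a twin.'' For (i) the canonical graph is $\graph{P}_{2\alpha+1}$ (the unique realization of $(1,2,\ldots,2)$ of length $\alpha+1$ by Theorem~\ref{uoddp}(i)) with a twin adjoined at one of its two rank-$j$ vertices; the two choices yield isomorphic graphs by the reflection symmetry of a path, and Lemma~\ref{pump} confirms that the resulting graph realizes $(1,x_\alpha,\ldots,x_1)$. The canonical graph for (iii) is obtained analogously from $\graph{P}_{2\alpha}$ and Theorem~\ref{uoddp}(iii).

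For (i), the strategy is to pin down the top of the graph first and then attach each lower layer. Given any $G$ realizing $(1,x_\alpha,\ldots,x_1)$, the initial segment $(1,x_\alpha,\ldots,x_{j+1})$ equals $(1,2,\ldots,2)$ of length $\alpha+1-j$, so by Theorem~\ref{uoddp}(i) and Lemma~\ref{trunc} the graph $\uprank{G}{j+1}$ is forced to be $\graph{P}_{2\alpha-2j+1}$ (or the single-vertex graph in the edge case $j=\alpha$). The three rank-$j$ vertices of $G$ then attach to the endpoints of this path via \upcorner, Lemma~\ref{nbrrank}, and Path Contraction (Corollary~\ref{lonhin}), in the same style as the inductive step of Theorem~\ref{uoddp}(i); the additional observation needed is that the ``extra'' rank-$j$ vertex (beyond the one adjacent to each endpoint) cannot be strictly cornered by any higher-rank vertex except as a twin of one of the two standard rank-$j$ neighbors, so the twinning is forced. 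Each remaining layer $\X_{j-1},\ldots,\X_1$ has exactly two vertices and attaches in the unique way via the same Path-Contraction argument as in Theorem~\ref{uoddp}(i). The small base cases---namely $\alpha=j=2$ giving vector $(1,3,2)$, together with the edge case $j=\alpha$ where $\uprank{G}{j+1}$ is a single vertex---are handled by direct analysis in the style of the base case of Theorem~\ref{uoddp}(i).

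For (ii), assume for contradiction that $G$ realizes $(1,x_\alpha,\ldots,x_1,1)$. By (i), $\uprank{G}{2}$ is the canonical path-with-twin graph from (i); the two rank-$2$ vertices of $G$ correspond to the endpoints of the underlying $\graph{P}_{2\alpha+1}$ and therefore lie at distance $2\alpha\ge 4$ in $\uprank{G}{2}$. By Corollary~\ref{xk1}, both are adjacent to the unique rank-$1$ vertex of $G$, producing a path of length $2$ between them in $G$ and contradicting Path Contraction.

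The proofs of (iii) and (iv) are parallel to (i) and (ii), with $\graph{P}_{2\alpha}$ replacing $\graph{P}_{2\alpha+1}$, Theorem~\ref{uoddp}(iii,iv) replacing Theorem~\ref{uoddp}(i,ii), and the base case $\alpha=j=2$ of (iii) handled by the unique $0$-realization of $(3,2)$ displayed in Figure~\ref{2332}. The main obstacle throughout is the attachment analysis at rank $j$: one must show, by careful case analysis using \upcorner\ and Lemma~\ref{nbrrank}, that the two ``twin'' rank-$j$ vertices are forced to sit on the same side of the underlying path and to share a common strict corner, and that no other configuration of the three rank-$j$ vertices is consistent with the prescribed rank cardinality vector. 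Once this is established, the remaining steps are formally identical to the corresponding arguments in Theorem~\ref{uoddp}.
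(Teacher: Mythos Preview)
Your identification of the target graph as ``a path with a twin at a rank-$j$ vertex'' is correct and matches the paper's construction.  However, the decomposition you propose---first pin down $\uprank{G}{j+1}$ as a path, then force the three rank-$j$ vertices into the twin configuration, then attach the lower layers one by one---has a genuine gap at the middle step.  The claim that ``the `extra' rank-$j$ vertex \ldots\ cannot be strictly cornered by any higher-rank vertex except as a twin'' is false as stated: the paper itself notes that there are \emph{four} graphs realizing $(1,2,3)$, so the rank-$j$ layer is \emph{not} determined by the structure above it.  The extra vertex can, for instance, be a pendant adjacent only to the top vertex $a$, or adjacent to $a$ and one endpoint, and still be legitimately strictly cornered.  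Twinning is only forced once you simultaneously use the constraint that $|\X_{j-1}|=2$; the interaction between these two layers is delicate and cannot be cleanly separated as you suggest.

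The paper takes a different route that avoids this trap: it first proves uniqueness for vectors of the form $(1,2,\ldots,2,3,2)$ with the $3$ in position~$2$ (doing the hard case analysis of the rank-$2$/rank-$1$ interaction once), and then obtains every other vector in the lemma as a standard extension of one of these, using Path Contraction to show the appended rank-$1$ layers can only be pendant edges off the two endpoints.  This ordering localises all the difficulty to the single situation where the $3$ sits immediately above the bottom layer, rather than having to re-do the attachment analysis at an arbitrary height $j$.  Your top-down scheme could in principle be repaired by analysing ranks $j$ and $j-1$ together, but that essentially reproduces the paper's case analysis at a general $j$ with no simplification; the paper's reduction to $j=2$ is the cleaner organisation.
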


\begin{proof} \

{\bf Proof of (i)}: 

We first suppose we have a list \vect{x} of the form $(1,3,2)$ or $(1,2, \ldots, 2, 3, 2)$, and we will show that it is uniquely realized, so we let \graph{G} be this unique graph.
If \vect{x} is $(1,3,2)$ or $(1,2,3,2)$, we will show that the corresponding graph \graph{G} is drawn in Figure~\ref{1232}.  Otherwise, we are considering an \vect{x} of length at least 5, of the form $(1,2, \ldots, 2, 3, 2)$; in this case the corresponding graph \graph{G} is partially drawn on the right side of Figure~\ref{1232}:
its bottom four ranks are drawn; also there are no edges between $V(\uprank{\graph{G}}{5})$ and any vertex of rank less than 4.   Once we have shown that such a list \vect{x} corresponds to such a unique graph \graph{G}, we can quickly obtain the uniqueness claim for any list which is a standard extension of \vect{x}.  Considering any such standard extension of \vect{x}, using the properties of \graph{G}, and key facts like \pathcon, we can see that any such standard extension is only realized by attaching an appropriate length path to each of the rank 1 vertices of \graph{G}.  The bulk of the proof now consists in showing that lists of the form \vect{x} are uniquely realized in the manner described.

We first deal with the cases of $(1,3,2)$ and $(1,2,3,2)$.  It is a simple exercise to see there is only one graph that realizes $(1,3,2)$ (see Figure ~\ref{1232}).
Now we show that there is only one graph that realizes $(1,2,3,2)$. Suppose \graph{G} realizes $(1,2,3,2)$, with $\X_4 = \{a\}$, $\X_3 = \{b_1, b_2\}$, $\X_2 = \{c_1, c_2, c_3\}$ and $\X_1 = \{d_1, d_2\}$.  There are 4 graphs realizing $(1,2,3)$ (note to the reader: in finding them, note that two have an edge between $a$ and $\X_1$, and two do not). 
In each of the 4 graphs we can assume without loss of generality that $b_1$ is adjacent only to $a$ and $c_1$, and $c_1$ has degree 1. Thus $c_1$ is at distance at least 3 from any other rank 2 vertex of \graph{G}, and
in any realization of $(1,2,3,2)$, $c_1$ must be adjacent to a vertex $d_1$ that is not adjacent to $b_1$, $c_2$ or $c_3$. This implies $c_1$ must strictly corner $d_1$.  The vertex $d_2$ must be adjacent to $c_2$ and $c_3$, and the only way to fill in the rest of the edges leads to Figure~\ref{1232} (to help see this, note that neither $b_2$ nor $a$ can strictly corner $d_2$).

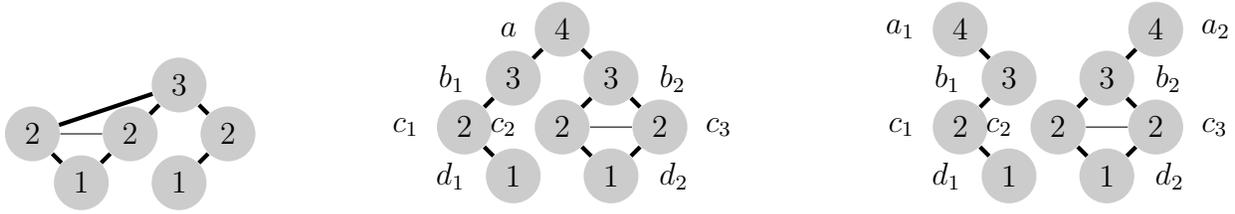
\begin{figure}
\begin{center}
\begin{tikzpicture}[every node/.style={circle,fill=black!20}, scale=.65]  
      \node (a1) at (1,2){$3$};
      \node (b1) at (-2,1) {$2$};
      \node (b2) at (0,1) {$2$};
      \node (b3) at (2,1) {$2$};
       \node (c1) at (-1,0) {$1$};
      \node (c2) at (1,0) {$1$};
    \foreach \from/\to in {b1/b2}
    \draw (\from) -- (\to);
        \foreach \from/\to in {a1/b1, a1/b2, a1/b3, b1/c1, b2/c1, b3/c2}
    \draw[ultra thick] (\from) -- (\to);
    \end{tikzpicture}
\hfill
    \begin{tikzpicture}[every node/.style={circle,fill=black!20}, scale=.65]  
      \node (a1) at (1,2)[label=left:$a$]{$4$};
      \node (b1) at (0,1) [label=left:$b_1$]{$3$};
      \node (b2) at (2,1) [label=right:$b_2$]{$3$};
      \node (c1) at (-1,0) [label=left:$c_1$]{$2$};
      \node (c2) at (1,0) [label=left:$c_2$]{$2$};
      \node (c3) at (3,0) [label=right:$c_3$]{$2$};
      \node (d1) at (0,-1) [label=left:$d_1$]{$1$};
      \node (d2) at (2,-1) [label=right:$d_2$]{$1$};
    \foreach \from/\to in {c3/c2}
    \draw (\from) -- (\to);
        \foreach \from/\to in {a1/b1, a1/b2, b1/c1, b2/c2, b2/c3, c1/d1, c2/d2, c3/d2}
 \draw[ultra thick] (\from) -- (\to);
    \end{tikzpicture}
    \hfill
    \begin{tikzpicture}[every node/.style={circle,fill=black!20}, scale=.65]  
      \node (a1) at (-1,2)[label=left:$a_1$]{$4$};
      \node (a2) at (3,2)[label=right:$a_2$]{$4$};
      \node (b1) at (0,1) [label=left:$b_1$]{$3$};
      \node (b2) at (2,1) [label=right:$b_2$]{$3$};
      \node (c1) at (-1,0) [label=left:$c_1$]{$2$};
      \node (c2) at (1,0) [label=left:$c_2$]{$2$};
      \node (c3) at (3,0) [label=right:$c_3$]{$2$};
      \node (d1) at (0,-1) [label=left:$d_1$]{$1$};
      \node (d2) at (2,-1) [label=right:$d_2$]{$1$};
    \foreach \from/\to in {c3/c2}
    \draw (\from) -- (\to);
        \foreach \from/\to in {a1/b1, a2/b2, b1/c1, b2/c2, b2/c3, c1/d1, c2/d2, c3/d2}
 \draw[ultra thick] (\from) -- (\to);
    \end{tikzpicture}
\end{center}

\caption{The unique graphs realizing $(1,3,2)$ and $(1,2,3,2)$, and the four lowest ranks of the unique graph realizing $(1,2,\ldots,2,2,3,2)$.}\label{1232}
\end{figure}

We now consider the case where \graph{G} is a graph of rank at least 5 that realizes $(1,2,\ldots,2,3,2)$. Let $\X_4 = \{a_1, a_2\}$, $\X_3 = \{b_1, b_2\}$, $\X_2 = \{c_1, c_2, c_3\}$ and $\X_1 = \{d_1, d_2\}$; we will show, without loss of generality, that the graph induced by these vertices of rank 4 and less, is pictured in Figure~\ref{1232}, on the right side, and that there are no edges between \uprank{\graph{G}}{5} and the vertices of rank less than 4.
By Lemma~\ref{uoddp}, 
\begin{quote}
$(\bigstar)$  \uprank{\graph{G}}{3} is uniquely realized as a path.
\end{quote}
\noindent
By $(\star)$, and
without loss of generality, 
$a_1$ is adjacent to $b_1$, $a_2$ is adjacent to $b_2$, and the distance between $b_1$ and $b_2$ in \uprank{\graph{G}}{3} is at least 4.  Thus by \pathcon, the distance between $b_1$ and $b_2$ in \graph{G} is at least 4.  Thus $b_1$ and $b_2$ cannot share any neighbors of rank 2, so without loss of generality we can assume $b_1$ is adjacent to $c_1$ but not $c_2$ and $b_2$ is adjacent to $c_2$, but not $c_1$.  We now make an \emph{observation}.
\begin{quote}
\emph{If the only rank 2 neighbor of $b_i$ is $c_i$, then $b_i$ must strictly corner $c_i$ in \uprank{\graph{G}}{2}.} 
\end{quote}

\noindent
Consider why the observation is true.
Since $c_i$ is adjacent to $b_i$, by $(\star)$, the only vertices that could strictly corner $c_i$ in \uprank{\graph{G}}{2} are $a_i$ and $b_i$.  If $a_i$ strictly cornered $c_i$ in \uprank{\graph{G}}{2} then it would also strictly corner $b_i$ in \uprank{\graph{G}}{2}, which cannot happen, so $b_i$ must strictly corner $c_i$ in \uprank{\graph{G}}{2}.  So the observation is true.

As mentioned above, at most one of $b_1$ or $b_2$ can be adjacent to $c_3$, so for some $i$, the only rank 2 neighbor of $b_i$ is $c_i$.  Thus the shortest path in \uprank{\graph{G}}{2} between $c_1$ and $c_2$ must include $b_i$ and $a_i$, so by \pathcon, $c_1$ and $c_2$ cannot be adjacent, nor adjacent to the same vertex.  Thus without loss of generality, $d_1$ is adjacent to $c_1$ and not $c_2$, and $d_2$ is adjacent to $c_2$ and not $c_1$.

Now $c_3$ must be adjacent to one of the rank 1 vertices, without loss of generality $d_2$.
Since $c_2$ and $c_3$ are at distance at most 2 in \graph{G}, by \pathcon, in \uprank{\graph{G}}{2} they are at distance at most 2, from which we can conclude that there is a vertex $x$ in \uprank{\graph{G}}{3} that is adjacent to both $c_2$ and $c_3$ (note that if $c_2$ and $c_3$ were adjacent, then the vertex $x$ will be the vertex that strictly corners $c_3$ in \uprank{\graph{G}}{2}).
We show that $b_2$ must be adjacent to $c_3$, by assuming for contradiction that it were not.  Then by the \emph{observation}, $b_2$ must strictly corner $c_2$ in \uprank{\graph{G}}{2}, so $a_2$ is not adjacent to $c_2$ and so cannot be $x$. By assumption, $x$ is not $b_2$.
Since $b_2$ strictly corners $c_2$ in \uprank{\graph{G}}{2}, $b_2$ has to be adjacent to $x$ violating $(\star)$. 
So we have that $b_2$ is adjacent to both $c_2$ and $c_3$.  Thus, just as we argued that $c_2$ is not adjacent to $d_1$, so $c_3$ is not adjacent to $d_1$.

Now, by $(\star)$, only $a_2$ or $b_2$ can strictly corner either $c_2$ or $c_3$ in \uprank{\graph{G}}{2}, but since $a_2$ cannot be adjacent to both $c_2$ and $c_3$, $b_2$ must strictly corner at least one of $c_2$ and $c_3$; without loss of generality, assume $b_2$ strictly corners $c_3$ in \uprank{\graph{G}}{2}.  
Now consider what vertex $y$ strictly corners $d_2$.  The vertex $y$ would have to be adjacent to at least $d_2, c_2,$ and $c_3$.  We know $y \neq a_2$ since $a_2$ cannot be adjacent to both $c_2$ and $c_3$.  The vertex $y$ cannot be another vertex in  \uprank{\graph{G}}{4}, since then $y$ would be adjacent to $c_3$ and since $b_2$ strictly corners $c_3$ in \uprank{\graph{G}}{2}, $b_2$ would have to be adjacent to $y$, violating $(\star)$.  The vertex $y$ can also not be $b_2$ since then $b_2$ would in fact strictly corner $c_3$ in \graph{G}.  Thus $d_2$ is strictly cornered by one of $c_2$ or $c_3$, meaning that $c_2$ is adjacent to $c_3$.
Viewing Figure~\ref{1232}, we have shown that all the displayed edges must be there and have ruled out most of the missing edges; we just need to rule out a few more edges.   We rule out any other edges attached to $c_2$ by considering what could corner $c_2$ in \uprank{\graph{G}}{2}: not $a_2$ since then $a_2$ would be adjacent to $c_2$ and $c_3$, and not any other vertex in \uprank{\graph{G}}{4}, since by $(\star)$ it is not adjacent to $b_2$.   So only $b_2$ can strictly corner  $c_2$ in \uprank{\graph{G}}{2}, so there can be no more edges attached to $c_2$.
We rule out an edge between $d_1$ and $d_2$ using \pathcon, since by the reasoning to this point we can now conclude that the distance between $c_1$ and $c_2$ is at least 5 in \uprank{\graph{G}}{2}.
Also $d_2$ can have no neighbors besides $c_2$ and $c_3$ because if it did, then nothing could strictly corner it; similarly, $d_1$ can have no other neighbors besides  $c_1$.

{\bf Proof of (iii)}:   
 The argument is the same as the one for $(i)$, with $0$-realizations of $(3,2)$, $(2,3,2)$ and $(2, \ldots, 2, 3,2)$ in place of $(1,3,2)$, $(1,2,3,2)$, and $(1,2, \ldots, 2,3,2)$.


{\bf Proofs of (ii) and (iv)}:
Assume for contradiction that we had a graph \graph{G} realizing the appropriate list.  Thus \uprank{\graph{G}}{2} is as described in parts (i) and (iii), so the two rank 2 vertices of \graph{G} are at distance greater than 2 in \uprank{\graph{G}}{2}, but by Corollary~\ref{xk1}, must both be adjacent to the rank 1 vertex in \graph{G}, contradicting \pathcon.
\end{proof}

\begin{figure}
\begin{center}
  \begin{tikzpicture}[every node/.style={circle,fill=black!20}]  
      \node (v) at (0,4)  {$4$};
      \node (d) at (2,3)   {$3$};
      \node (c) at (1,2)    {$3$};
      \node (b) at (-1,2)   {$3$};
      \node (a) at (-2,3)   {$3$};
      \node (y) at (1,1)    {$2$};
      \node (x) at (-1,1)   {$2$};
      \node (w) at (0,0)    {$1$};
    \foreach \from/\to in {x/y, a/b, b/c, c/d, d/a, x/a, x/c, y/b, y/d}
    \draw (\from) -- (\to);
        \foreach \from/\to in {w/x, w/y, a/v, b/v, c/v, d/v, x/b, y/c}
    \draw[ultra thick] (\from) -- (\to);
    \end{tikzpicture}
 \hspace{.3in}
      \begin{tikzpicture}[every node/.style={circle,fill=black!20}, scale=1.2]  
      \node (a) at (0,4) {$4$};
      \node (b) at (2,4.5)  {$4$};
      \node (c) at (4,4) {$4$};
      \node (d) at (0,3)  {$3$};
      \node (e) at (2,3)  {$3$};
      \node (f) at (3.5,3){$3$};
      \node (g) at (2,2)  {$2$};
      \node (h) at (4,2)  {$2$};
      \node (i) at (2,1)  {$1$};
    \foreach \from/\to in {a/b, a/c, b/c, a/d, a/e, a/g, b/e, b/f, c/d, c/f, c/h, c/h, e/f, d/i, d/g, d/h, e/g, f/h, g/i, h/i}
    \draw (\from) -- (\to);
    \foreach \from/\to in {a/d, c/d, b/e, b/f, c/h, a/g, d/i}
    \draw[ultra thick] (\from) -- (\to);
    \end{tikzpicture}

\end{center}

\caption{A graph $1$-realizing (1,4,2,1) and a graph $0$-realizing $(3,3,2,1)$.} \label{2531}
\end{figure}
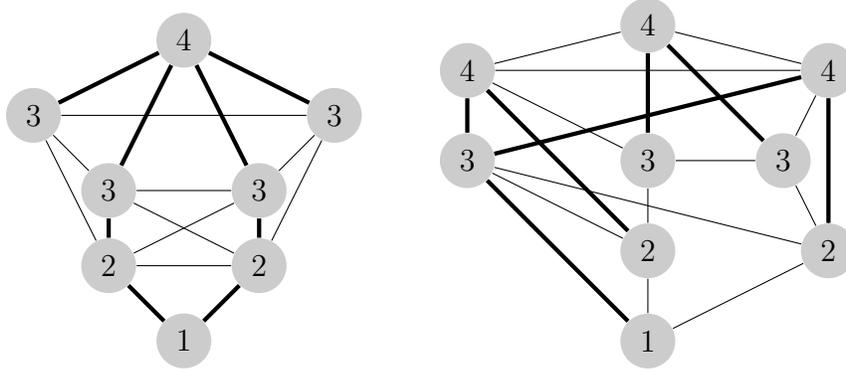

\begin{theorem} \label{thm_n-5}
A cop-win graph on $n \ge 11$ vertices has capture time $n-5$ if and only if
one of the following conditions holds:

\begin{enumerate}
\item It $1$-realizes a standard extension of $(1,4,2,1)$.
\item \label{itemAugment2221}
It $1$-realizes a list formed by taking a standard extension of $(2,2,2,1)$ and then augmenting by adding 1 to any single entry.
\item \label{cond1241} It $0$-realizes a standard extension of $(3,3,2,1)$.
\end{enumerate}
\end{theorem}

\begin{proof}

By Theorem~\ref{thm_rank_capt_time} we know that any graph satisfying one of the conditions does have capture time $n-5$. Observing Figures~\ref{h7} and \ref{2531} we see that we can $1$-realize $(1,4,2,1)$ and $(2,2,2,1)$, and
$0$-realize $(3,3,2,1)$; thus the three classes of graphs in the statement of the theorem are non-empty. 
It  remains to show that our three conditions have not missed any graphs.
Let \graph{G} be a cop-win graph on $n \ge 11$ vertices, with capture time $n-5$, 
with rank cardinality list $\vect{x} = (x_{\alpha}, \ldots, x_1)$.
Since $n \ge 11$, \vect{x} must have length at least 6, and at least one of the first 6 entries of \vect{x}, besides $x_\alpha$, must be a 1 (since otherwise Theorem~\ref{thm_rank_capt_time} would imply \graph{G} has capture time less than $n-5$).
 So suppose $x_i = 1$ and $x_j > 1$ for $i < j < \alpha$, and note that $i \le  \alpha - 3$ by Corollary~\ref{norca-1} and Proposition~\ref{norca-2}.
Consider cases on whether \graph{G} is \tp{0} or \tp{1}.

\begin{itemize}

\item
\emph{Case: \graph{G} is \tp{0}.} 
  
If $x_i$ is $x_{\alpha - 5}$, then in order to have capture time $n-5$, we must have $(2,2,2,2,2,1)$ as an initial segment of \vect{x}, but this list is not $0$-realizable by Lemma~\ref{uoddp}.

If $x_i$ is $x_{\alpha - 4}$, then in order to have capture time $n-5$, we have the following possible initial segments of \vect{x}: $(3,2,2,2,1)$, $(2,3,2,2,1)$, $(2,2,3,2,1)$, or $(2,2,2,3,1)$.
The first three lists are not $0$-realizable by  Lemma~\ref{pathlike}. We can show the list $(2,2,2,3,1)$ is not $0$-realizable using Lemma~\ref{uoddp} and \pathcon.

If $x_i$ is $x_{\alpha - 3}$,  then in order to have capture time $n-5$, the possible initial segments are: $(3,3,2,1)$,  $(3,2,3,1)$,  $(2,3,3,1)$,  $(4,2,2,1)$,  $(2,4,2,1)$, or  $(2,2,4,1)$.
The first list $(3,3,2,1)$ is $0$-realizable as required, but the rest are not $0$-realizable.
The lists $(2,3,3,1)$, $(2,4,2,1)$, $(2,2,4,1)$ are not 0-realizable by Lemma~\ref{2mk1con}, and
the lists $(3,2,3,1)$ and $(4,2,2,1)$ are not $0$-realizable by Lemma~\ref{m2k1}.

\item
\emph{Case: \graph{G} is \tp{1}.} 


If $x_i$ is $x_{\alpha - 5}$, then in order to have capture time $n-5$, we must have $(1,2,2,2,2,1)$ as an initial segment of \vect{x}, but this list is not realizable by Lemma~\ref{uoddp}.

If $x_i$ is $x_{\alpha - 4}$, then in order to have capture time $n-5$, we have the following possible initial segments of \vect{x}: $(2,2,2,2,1)$,
$(1,3,2,2,1)$, $(1,2,3,2,1)$, or $(1,2,2,3,1)$.
The first list satisfies Condition~\ref{itemAugment2221} in the statement of the theorem and realizable as required. 
By Lemma~\ref{pathlike} the second and third lists are not realizable.  
The last list is not realizable by Lemma~\ref{uoddp} and \pathcon.

If $x_i$ is $x_{\alpha - 3}$,  then in order to have capture time $n-5$, the possible initial segments are: $(3,2,2,1)$, $(2,3,2,1)$, $(2,2,3,1)$, $(1,4,2,1)$, $(1,2,4,1)$, or $(1,3,3,1)$.
The first four lists are realizable as required (The first three satisfy Condition~\ref{itemAugment2221}, and the fourth satisfies Condition~\ref{cond1241}).  
The last two lists are not realizable by Corollary~\ref{cor_not12k1_not13k1}.

\end{itemize}
\end{proof}

\section{Future Work}\label{future}

The main results of our paper are structural characterizations of $\mathcal{G}_n^{n-4}$ and $\mathcal{G}_n^{n-5}$, which suggests the following open question.

\begin{question}
Find structural characterizations of $\mathcal{G}_n^s$ for all $s \le n-4$.
\end{question}

\noindent
Our approach is to give the charaterization in terms of what lists the graphs should realize.  With some terminology, we will be more specific about our approach.

\begin{defn} A list \vect{x}, of length at least 2, is \dword{$t$-minimal} if 
the only $t$-realizable list $\le \vect{x}$, of length at least 2, is \vect{x} itself. 
A list is \dword{minimal} if it is either $0$-minimal or $1$-minimal. 
\end{defn}

\noindent
For example, it follows from Theorem~\ref{gave} that $(2,2,2,1)$ is $1$-minimal, and thus, for example,
$(2,7,2,1)$ and $(2,2,2,1,1,1)$ are not $1$-minimal.  We can restate the crux of our main results (recall the ordering on lists from Definition~\ref{def_vecop}).
Theorem~\ref{gave} states that for $n \ge 9$, $\mathcal{G}_n^{n-4}$ is the set of graphs with $n$ vertices that $1$-realize a list of length $n-3$ which is larger than $(2,2,2,1)$.
Theorem~\ref{thm_n-5} states that for $n \ge 11$, $\mathcal{G}_n^{n-5}$ is the set of graphs with $n$ vertices that either:
1) $0$-realize a list of length $n-5$ which is larger than $(3,3,2,1)$, or
2) $1$-realize a list of length $n-4$ which is larger than $(1,4,2,1)$ or $(2,2,2,1)$.

A general approach to characterizing some $\mathcal{G}_n^s$ is to find the appropriate minimal lists and take the appropriate length lists that are larger.  The key technical point then becomes determining which lists are minimal.  In other words, we can make Question~\ref{mainq} more specific:

\begin{question} \label{last_quest}
For $t \in \{0, 1\}$, which lists are $t$-minimal?
\end{question}
From the results in this paper we can conclude that the lists
$(1,2)$, $(1,4,2,1)$, and $(2,2,2,1)$ are $1$-minimal, and 
the lists $(2,2)$,  $(2,5,3,1)$,  $(2,6,2,1)$, and $(3,3,2,1)$ are $0$-minimal.  In our unpublished note \cite{OO17_Ex}, 
we have more examples and speculations relating to Question~\ref{last_quest}.

\appendix

\section*{Appendix} 

The results proved in this appendix are included to keep the paper self-contained.  However more general results, that include those here, are contained in our submitted paper \cite{OO16} which was unpublished at the time this paper was published.
Similar results are proved in \cite{CFM2014}; the relationship to our approach is discussed in \cite{OO16}. 

\vspace{2mm}
\noindent
The proof that the corner ranking procedure is well defined follows.
\begin{proof}
To show that the corner ranking procedure is well-defined on cop-win graphs, it suffices to show if \graph{G} is a cop-win graph which is not a clique, then \graph{G} must have a strict corner.  Supposing \graph{G} is a non-clique which has no strict corners, we show that it is not cop-win.  Let $v \in V(\graph{G})$ be some corner, and let $v_1, \ldots, v_k$ be all the vertices which corner $v$, which means that any two vertices  among $v, v_1, \ldots, v_k$ are twins.  Using the idea of corner elimination, $\graph{G}$ is cop-win if and only if the graph $\graph{G}'$ obtained by deleting the corners $v_1, \ldots, v_k$ is cop-win. Note that $v$ is not a corner in $\graph{G}'$, and since \graph{G} was not a clique, $\graph{G}'$ 
is not a clique.
If $\graph{G}'$ has no corners, then \graph{G} is not cop-win.  Otherwise, repeating the removal process for the remaining sets of twins in $\graph{G}'$ will eventually result in a graph that has no corners.  Thus \graph{G} is not cop-win.

\end{proof}

\noindent
The proof of Lemma~\ref{higran} follows.
\begin{proof}
Suppose $(v_1, \ldots, v_k)$ is a maximal sequence of strict corners such that $v_1 = v$ and each $v_{i+1}$ strictly corners $v_i$. 
Note that $v_j$ strictly corners $v_i$ if $i < j$, so $v_k$ strictly corners $v$. 
Since $v_k$ is a strict corner, it must be strictly cornered by some vertex $w \notin \{v_1, \ldots, v_k \}$.  By the maximality of the sequence, $w$ is not a strict corner.
Since $v$ is strictly cornered by $v_k$ and $v_k$ is strictly cornered by $w$, $v$ is strictly cornered by $w$, which is not a strict corner, and thus $w$ is of higher rank.
\end{proof}

\noindent
To prove Theorem~\ref{thm_rank_capt_time}, we need to define projection functions relative to corner rank. For any graph 
\graph{G}, define $\power{\graph{G}}$ to be the non-empty subsets of $V(\graph{G})$.

\begin{def_nonum} 
Suppose \graph{G} is a graph with corner rank $\alpha$.
We define the functions $f_1, \ldots, f_{\alpha -1}$ and 
$F_1, \ldots, F_{\alpha-1}, F_{\alpha}$,
where
$f_k: \power{\uprank{\graph{G}}{k}}  \to \power{ \uprank{\graph{G}}{k+1}}$ and
 $F_k:\power{\graph{G}} \to \power{\uprank{\graph{G}}{k}}$.

\begin{itemize}

\item
For a single vertex $u \in V(\uprank{\graph{G}}{k})$, define
$$f_k(\{u\}) = 
\begin{cases}
\{u\} & \hbox{if $\rfunct{u} > k$} \\
\hbox{the set of vertices in \uprank{\graph{G}}{k+1} 
that strictly corner $u$ in \uprank{\graph{G}}{k} } & \hbox{if $\rfunct{u} = k$.}
\end{cases}
$$

\item $f_k(\{u_1, \ldots, u_t\}) = \bigcup\limits_{1\le i \le t}f_k(\{ u_i \})$.

\item
Let $F_1:  \power{\graph{G}} \to \power{\graph{G}}$ be the identity function.

\item
For $k \ge 2$, let $F_k = f_{k-1} \circ \cdots \circ f_1$.

\end{itemize}

\end{def_nonum}
\noindent
For a function $h$ whose domain is sets of vertices, we adopt the usual convention that $h(u) = h(\{ u \})$ for a single vertex $u$. We remark that by
by Lemma~\ref{higran} the functions $f_k$ are guaranteed to have
non-empty sets for values.
We say $v$ is a \dword{$k$-projection} (or simply a \dword{projection}) of $w$ if $v \in F_k(w)$.

\begin{def_nonum}
Let \graph{H} and \graph{G} be two graphs. We say the function $h: \power{\graph{H}} \to \power{\graph{G}}$ is a \dword{homomorphism} if 
all the vertices of $h(U)$ are adjacent to all the vertices of $h(V)$ whenever all the 
vertices of $U$ are adjacent to all the vertices of $V$.


\end{def_nonum}

\begin{lem_nonum}  
For any graph with corner rank $\alpha$, its associated functions
$f_1, \ldots, f_{\alpha -1}$ and 
$F_1, \ldots, F_{\alpha-1}, F_{\alpha}$ are homomorphisms.
\end{lem_nonum}

\begin{proof}
Let \graph{G} be the graph.
The identity function $F_1$ is a homomorphism. We show that each $f_k$ is a homomorphism, which implies all other $F_k$'s are homomorphisms
because the property is preserved by composition.  
We prove that $f_k$ is a homomorphism when the sets $U$ and $V$ consist of just the single vertices $u$ and $v$, respectively.
The general case then follows immediately.
Suppose $u, v \in V(\uprank{\graph{G}}{k})$ are distinct and adjacent, and let $u^* \in f_k(u)$ and
$v^* \in f_k(v)$; we show that $u^*$ and $v^*$ are adjacent.  Note that even if $u^* = v^*$, the argument works since our graphs are reflexive.

\textit{Case 1:}  Suppose $u,v \in V(\uprank{\graph{G}}{k+1})$.  Then $f_k(v) = \{ v \}$ and $f_k(u) = \{ u \}$,
and so $u^* = u$ and $v^* = v$ are  adjacent.

\textit{Case 2:} Suppose  $v \in V(\uprank{\graph{G}}{k+1})$, $u \notin V(\uprank{\graph{G}}{k+1})$.
So $v^* = v$ and $u^* \neq u$.  
Since $u^*$ strictly corners $u$ in \uprank{\graph{G}}{k}, $u^*$ is adjacent to $v$, and thus $u^*$  and $v^*$ are adjacent.

\textit{Case 3:} Suppose $u,v \notin V(\uprank{\graph{G}}{k+1})$. 
So $v^* \neq v$ and $u^* \neq u$.  
Since $u^*$ strictly corners $u$ in \uprank{\graph{G}}{k}, $u^*$ is adjacent to both $u$ and $v$. Since $v^*$ strictly corners $v$ in
\uprank{\graph{G}}{k}, and $v$ is adjacent to $u^*$, we have that
$v^*$ is also adjacent to $u^*$. 
\end{proof}

\noindent
The proof of Theorem~\ref{thm_rank_capt_time} follows.
\begin{proof}
Let \graph{G} be a \tp{t} cop-win graph with corner rank $\alpha$.  To show that $\capt({\graph{G}}) = \alpha - t$, we prove an upper and lower bound on $\capt({\graph{G}})$.

First we show $\capt({\graph{G}}) \le \alpha - t$.
For $k \ge 1$, we say that the robber is \dword{$k$-caught} if the cop is at some vertex $c$, the robber at some vertex $r$, and $c \in F_k(r)$.
We describe a strategy on \graph{G} for the cop that succeeds in at most $\alpha - t$ cop moves.  The cop starts at any vertex of corner rank $\alpha$.  
No matter where the robber starts, if
\graph{G} is \tp{1}, then since the cop dominates the top two ranks of vertices, the cop can play so after one cop move, the robber is $(\alpha - 1)$-caught.  Similarly,  if \graph{G} is \tp{0}, then the cop can  play so that after one move the robber is $\alpha$-caught.
We show the following claim:
\begin{quote}
If the robber is $k$-caught, for $k \ge 2$, then 
then for any robber move, there is a cop move which leaves the robber
$(k-1)$-caught.
\end{quote}
Proving the claim proves the upper bound since we can repeatedly apply the claim and once the robber is $1$-caught, the robber is actually caught.  Now we prove the claim, where we suppose the cop is at $c$ and the robber is at $r$.
Since $c \in F_k(r) =  f_{k-1} \circ  F_{k-1} (r)$, either 
$c \in F_{k-1}(r)$ or 
there is an $r' \in F_{k-1}(r)$ such that $c$ strictly corners $r'$ in 
\uprank{\graph{G}}{k-1}.  Either way, there is a $(k-1)$-projection $r'$ of $r$ such that $c$ corners $r'$ in \uprank{\graph{G}}{k-1}.  Thus, since $F_{k-1}$ is a homomorphism, wherever the robber moves to, from $r$, the cop can move so that the robber is $(k-1)$-caught.

Now we show $\capt({\graph{G}}) \ge \alpha - t$.
We say that a robber location at vertex $r$ is $k$-\dword{proj-safe} if its corner rank is at least $k$ and the cop is at a vertex $c$ such that
there is some $c' \in F_k(c)$ such that $c'$ is not adjacent to $r$.
Since $F_k$ is a homomorphism, if a location is $k$-proj-safe, then the cop is not adjacent to the robber.

If \graph{G} is \tp{0}, then the robber starts at a vertex which is $(\alpha - 1)$-proj-safe, while if \graph{G} is 
\tp{1}, then the robber starts at a vertex which is $(\alpha - 2)$-proj-safe. We show that starting in such a way is possible.
We use the fact that since $F_k$ is a homomorphism, the vertices of rank $k$ or higher adjacent to a vertex $c$ are a subset of the vertices of rank $k$ or higher adjacent to a vertex from $F_k(c)$.
Thus to see that a $(\alpha - 1)$-proj-safe start is possible in the \tp{0} case, it suffices to show there is no vertex in \uprank{\graph{G}}{\alpha - 1} that dominates  \uprank{\graph{G}}{\alpha - 1}.  If there were such a vertex, it would have to be rank $\alpha$, but then the graph would be \tp{1}. Similarly, to see that such a start is possible in the 
\tp{1} case, 
it suffices to show that there is no vertex in
\uprank{\graph{G}}{\alpha - 2} that dominates \uprank{\graph{G}}{\alpha - 2}.   Suppose for the sake of contradiction there is such a vertex $v$. 
In 
\uprank{\graph{G}}{\alpha - 2}, $v$ cannot strictly corner any vertex in 
\uprank{\graph{G}}{\alpha - 1}, so all the vertices of 
\uprank{\graph{G}}{\alpha - 1}
are twins with $v$.  But the fact that a vertex of rank $\alpha - 1$ is twins with a vertex of rank $\alpha$ leads to a contradiction.
The lower bound will follow once we prove the following claim:
\begin{quote}
If the robber location is $k$-proj-safe, for $k \ge 2$, then no matter what the cop does, the robber has a move to a $(k-1)$-proj-safe location.
\end{quote}
To prove the claim, suppose the robber is at the $k$-proj-safe vertex $r_0$, and the cop is at $c_0$.  Thus there exists $c'_0 \in F_k(c_0)$ such that $c'_0$ is not adjacent to $r_0$.  Suppose the cop then moves to $c_1$.  
Assume for the sake of contradiction that from $r_0$ the robber does not have a move to a $(k-1)$-proj-safe vertex.
For $r_0$ not to have such a move, means 
that all $c_1' \in F_{k-1}(c_1)$ corner $r_0$ in 
\uprank{\graph{G}}{k-1}.
Consider one such $c_1'$. 
Since $r_0$ has rank at least $k$, and thus cannot be a strict corner in \uprank{\graph{G}}{k-1}, this cornering cannot be strict, and thus $c_1'$ and $r_0$ are twins in 
\uprank{\graph{G}}{k-1}. Since $r_0$ has rank at least $k$, $c_1'$ must also have rank at least $k$.  This implies that $f_{k-1}(c_1') = \{c_1'\}$ and so $c_1' \in F_k(c_1) =  f_{k-1} \circ  F_{k-1} (c_1)$. However since $F_k$ is a homomorphism and $c_1$ is adjacent to $c_0$, $c_1'$ is adjacent to $c_0'$. Since $r_0$ is not adjacent to $c_0'$, this contradicts the fact that $c_1'$ and $r_0$ are twins in 
\uprank{\graph{G}}{k-1}.
\end{proof}

\end{document}